\newtheorem{theorem}{Theorem}
\newtheorem{claim}{Claim}
\newtheorem{proposition}{Proposition}
\newcommand{\argmax}{\operatornamewithlimits{argmax}}
\newcommand{\bfP}{\mathbf{P}}
\newcommand{\bfQ}{\mathbf{Q}}
\newcommand{\dham}{\mathbf{d}_H}
\newcommand{\Lmn}{\mathcal{L}_{m,n}}
\newcommand{\N}{\mathcal{N}}
\newcommand{\Pmn}{\mathcal{P}_{m,n,d}^{\epsilon}}
\newcommand{\Pz}{\mathcal{P}_{m,n,0}^{\epsilon}}
\newcommand{\tr}{\mathbf{tr}}
\begin{document}
\title{Dynamics in atomic signaling games}
\author{Michael J. Fox, Behrouz Touri, and Jeff S. Shamma \\\small{School of Electrical and Computer Engineering\\ Georgia Institute of Technology}}
\date{}
\maketitle
\begin{abstract}
We study an atomic signaling game under stochastic evolutionary dynamics. There is a finite number of players who repeatedly update from a finite number of available languages/signaling strategies. Players imitate the most fit agents with high probability or mutate with low probability.
We analyze the long-run distribution of states and show that, for sufficiently small mutation probability, its support is limited to efficient communication systems. We find that this behavior is insensitive to the particular choice of evolutionary dynamic, a property that is due to the game having a potential structure with a potential function corresponding to average fitness. Consequently, the model supports conclusions similar to those found in the literature on language competition. That is, we show that efficient languages eventually predominate the society while reproducing the empirical phenomenon of linguistic drift.
The emergence of efficiency in the atomic case can be contrasted with results for non-atomic signaling games that establish the non-negligible possibility of convergence, under replicator dynamics, to states of unbounded efficiency loss.



\end{abstract}

\section{Introduction}
\label{intro}
Biological systems at many different scales depend on reliable and efficient signaling. Mathematical modeling of signaling may provide insights into conditions conducive to the emergence of communication in biological \cite{smith2003animal} and non-biological \cite{Yong_Coevolution,Pollack_Linear_PE} settings. A key problem is that of coordination. That is, how do systems develop consistent coordination/communication protocols without the benefit of a centralized coordinating entity?

One way to model the coordination problem in distributed communication is through signaling games \cite{Lewis_Convention}. Researchers began studying these games in a biological context more recently \cite{springerlink:10.1007/s002850070004,NOWAK1999147}. The strategies available to the players in a signaling game are pairs of mappings. A speaking strategy is a mapping from the set of objects to the set of symbols, while a hearing strategy is a mapping from the set of symbols back to the set of objects. A communication event involves two players, i.e., a speaker and hearer, an object, and a signal or a message to communicate the object between the two players. The player assigned the role of speaker produces a signal that her speaking strategy associates with the object. The player assigned the role of hearer then announces the object her hearing strategy associates with this signal. If the interpreted object agrees with the original one, then the communication event is considered to be successful. Such games can be studied in the presence of a network of agents where agents meet randomly (or deterministically) and use their speaking and hearing strategies to communicate with each other.

A fundamental question in signaling games is identification of distributed learning schemes that lead to an efficient communication system. In the continuum agent, or ``non-atomic'', setting it has been shown that the replicator dynamics can converge, from a set of initial conditions with positive measure, to neutrally stable states that do not maximize communication efficiency \cite{Pawlowitsch2008203,Huttegger2007-HUTEAT-2}. While some guarantees on performance still exist \cite{evolang_fox}, we focus our attention here on processes leading to maximum efficiency. Selection-mutation dynamics have been suggested \cite{Hofbauer2008843} as an alternative to explain away the inefficiency. The system is analyzed for the special case of binary signaling games. The ``mass action'' perspective of non-atomic signaling games is taken up for analytical convenience. The more realistic discrete agent, or ``atomic'', model is approximated by the non-atomic model over finite time horizons for sufficiently large populations \cite{Benaim2003}. Characterization of states favored by selection in the frequency dependent Moran process \cite{Pawlowitsch2007606} has been carried out for the non-atomic signaling game.

In this paper we study the long-run behavior of stochastic evolutionary dynamics in the non-atomic signaling game.

As a starting point we show that the non-atomic signaling game is a potential game \cite{MondererShapley96}. Some learning dynamics exist (see for instance \cite{RePEc:eee:gamebe:v:5:y:1993:i:3:p:387-424}) that are equipped with substantive performance guarantees for all or some of the potential games. For such dynamics, the problem of efficiency becomes quite trivial, which is good news for the proactive engineering of communicating agents. In essence, it turns out that in potential games, the individual, myopic and distributed optimization activities of agents leads to centralized optimization of the so-called potential function. Since the potential function of the atomic signaling game is proportional to average fitness, it is intuitively reasonable that dynamics resembling natural selection\footnote{Our results also can be interpreted in the context of cultural evolution, but we emphasize the biological interpretation first and foremost.} with random mutations achieve maximum average fitness.

Rather, our focus is on evolutionary dynamics that are motivated imitation dynamics. Agents randomly imitate successful other agents or, with small probability, mutate. We will show that over the long run, agents mostly coordinate on a single efficient language. The form of that language will change over time consistent with the empirically observed phenomenon of linguistic drift \cite{Jespersen_PL}.

The tools utilized herein are analysis methods for perturbed Markov chains. In particular, we will focus on the notion of \textit{stochastic stability}. Because of persistent random effects, our dynamics need not converge to any specific state. Rather, we analyze the long run occupancy measures of different states. As a certain mutation probability becomes small, agents will spend almost all of the time in agreement on an efficient language. By contrast, all other states are visited with vanishingly small long run frequency. Such methods are well established for the analysis of evolutionary systems, especially for problems of social coordination and evolution of convention \cite{Young_Conventions,Young_ISSS}.

The remainder of this paper is organized as follows. Section~2 presents the framework of signaling games and establishes the potential game property. Section~3 introduces a dynamic process based on imitation with mutations. Section~3 goes on to present the main results on the stochastic stability of efficient languages. {Section~4 presents a variation on the dynamic process that allows for more flexibility in determining which agents are allowed to reproduce}. Section~5 offers some concluding remarks. Finally, background theory on stochastic stability is presented in an appendix.

\section{Signaling Games}

\subsection{Setup}

There are $m \geq 2$ objects and $n \geq 2$ symbols. A speaking strategy is represented by an $m \times n$ binary, row stochastic matrix\footnote{A binary, row-stochastic matrix is a matrix that has one element per row that is equal to one and all other elements equal to zero.} $P$. If $P_{ij}=1$ then the speaking strategy associates object $i$ with signal $j$. Similarly, a hearing strategy is an $n \times m$ binary, row-stochastic matrix $Q$. If $Q_{ij}=1$ then the hearing strategy associates signal $i$ with object $j$. Thus, assuming uniform prior probability over objects that are observed by the speaker, the symmetric payoff of a player utilizing strategy $P$ for speaking and a player utilizing strategy $Q$ for hearing is proportional to
\begin{equation}\nonumber
\sum_i \sum_j P_{ij}Q_{ji} = \tr (PQ),
\end{equation}
where $\tr (M)$ denotes the trace of a square matrix $M$.

We call a joint speaking and hearing strategy $(P,Q)$ a \textit{language} and use $\mathcal{L}_{m,n}$ to refer to the set of all such pairs.
In this paper, we study the atomic signaling game in which there is a finite set of players, with each one selecting her own speaking and hearing strategies. However, we first review the non-atomic version of the game.

\subsection{The non-atomic signaling game}
The set $\mathcal{L}_{m,n}$ has cardinality $m^nn^m$. Assume any ordering on the elements of $\mathcal{L}_{m,n}$ so that the $m^nn^m$-dimensional simplex
\begin{equation}\nonumber
S_{m,n} = \{\textbf{x} \in \mathbb{R}^{m^nn^m}: \sum_i x_i = 1, x_i \geq 0 \quad \forall i\}
\end{equation}
gives the possible distributions of a single population over the set of languages. Let $(P^k,Q^k)$ be the $k$th language in the ordering. Then the fitness of a player utilizing $(P^k,Q^k)$ in a population state \textbf{x} is
\begin{equation}\nonumber
f_k(\textbf{x}) = \sum_{i=1}^{m^nn^m} x_i \left( \tr (P^kQ^i)+\tr (P^iQ^k) \right) .
\end{equation}
In other words, the fitness $f_k(\textbf{x})$ of an agent speaking the $k$th language is the expected payoff of its interaction with a random language that is distributed according to probability distribution $\mathbf{x}$. Our main focus in this paper is to focus on the evolution of the language in a population. In other words, we are interested in the proportion of the society speaking each language $\mathbf{x}(t)$ as a function of time $t$.

In the case of the non-atomic language game, one way to model this evolution is through the replicator dynamics
\begin{equation}\label{eqn:replicator}
\dot{x}_i(t) = x_i(t) \left( f_i(x(t)) - \sum_j x_j(t) f_j(x(t)) \right).
\end{equation}
In what follows, for the sake of notational simplicity, whenever there is no confusion, we may drop the time variable $t$.
Intuitively, the replicator dynamics \eqref{eqn:replicator} means that the rate of increase of the population speaking the $i^{\text{th}}$ language, i.e.\ $\frac{d}{dt} \log(x_i)$, is proportional to relative fitness of $i^{\text{th}}$ language compared to other languages.

One of the issues with the replicator dynamics \eqref{eqn:replicator} for non-atomic populations is that it may converge to states that do not maximize the average fitness,
\begin{equation}\nonumber
W(\textbf{x}) = \sum_{i = 1}^{m^nn^m}x_i f_i(\textbf{x}).
\end{equation}
It has been shown \cite{Pawlowitsch2008203}, \cite{Huttegger2007-HUTEAT-2} that convergence to these sub-optimal states occurs from a set of initial states with non-zero measure.

In order to reconcile this fact with the intuitive notion that evolution leads to efficient signaling, a number of alternative models have been proposed. Mutation-selection dynamics, a perturbation of the replicator dynamics, have been studied in the binary case \cite{Hofbauer2008843}. These dynamics add a ``mutation'' term to the replicator dynamics, intended to capture the effect of random mutations. In non-biological contexts this term can be interpreted as experimentation.

In Section~\ref{secdynamics} we will study a similar dynamic. However, we instead concentrate on the finite-population, or ``atomic'' game. The motivation for considering atomic agents is that it enables us to analyze the long-run behavior of stochastic evolutionary dynamics. A common justification for studying mass-action heuristics like the non-atomic signaling game is that, over finite time horizons, these models approximate stochastic evolutionary dynamics with sufficiently many atomic agents \cite{Benaim2003}.

\subsection{The atomic signaling game}

Consider a society consisting of $N$ agents, $1,\ldots,N$, and suppose that agent $i$ speaks language $(P_i,Q_i)\in \mathcal{L}_{m,n}$. Let $(\textbf{P},\textbf{Q}) \in \mathcal{L}_{m,n}^N$ be a vector consisting of the $N$ languages spoken by the agents. Similarly to the infinite-population model, the fitness of agent $i$ can be defined as the expected payoff of her interacting with a random agent $j$ picked uniformly from $\{1,\ldots,N\}$, i.e.,
\begin{equation}\label{eqn:payoffs}
f_i(\textbf{P},\textbf{Q}) = \tr (P_i \frac{1}{N-1} \sum_{j \neq i} Q_j) + \tr (\frac{1}{N-1}\sum_{j \neq i} P_j  Q_i).
\end{equation}
The frequency dependent Moran process has been analyzed for this game \cite{Pawlowitsch2007606}, suggesting that evolution tends towards efficient states in the limit of so-called ``weak selection''. Recent work has addressed and contrasted the implications of weak selection versus other models \cite{wuetal}. Here, we instead seek to characterize explicitly the long-run behavior of this game under stochastic evolutionary dynamics.

We first show that the atomic signaling game has the underlying structure of being a potential game. An $N$ player game with strategy sets $\mathcal{A}_1,\ldots,\mathcal{A}_N$ and payoff functions $f_1,\ldots,f_N$ is referred to as \textit{a potential game} \cite{MondererShapley96} if there exists a function $\Phi: \mathcal{A}=\mathcal{A}_1\times\cdots\mathcal{A}_N \rightarrow \mathbb{R}$ such that for any player $i$, any joint strategy $\textbf{s}\in\mathcal{A}$, and any strategy $s\in\mathcal{A}_i$ of player $i$ we have
\begin{equation}\nonumber
f_i(s,\textbf{s}_{-i}) - f_i(\textbf{s}) = \Phi(s,\textbf{s}_{-i}) - \Phi(\textbf{s}),
\end{equation}
where $\textbf{s}_{-i}$ is the vector of strategies for players other than $i$.  An important feature of potential games is that often myopic uncoordinated optimization of individual payoffs leads to optimization of the potential function $\Phi$.

 In fact, the atomic language game with payoff functions \eqref{eqn:payoffs} is a potential game.
\begin{theorem}\label{thm:potential}
   The finite-population language game is a potential game with potential function $\Phi \equiv \frac{1}{2}\sum_{i=1}^N f_i$.
\end{theorem}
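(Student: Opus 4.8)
The plan is to recognize the atomic signaling game as a symmetric pairwise-interaction game and then exhibit the standard ``half the sum of payoffs'' potential explicitly. First I would isolate the pairwise structure: for languages $a=(P_a,Q_a)$ and $b=(P_b,Q_b)$ in $\Lmn$, define the two-player payoff
\begin{equation}\nonumber
u(a,b) := \tr(P_aQ_b) + \tr(P_bQ_a),
\end{equation}
and observe that it is symmetric, $u(a,b)=u(b,a)$, immediately from the definition (just swap the two summands; no trace identity is needed). Writing $s_i := (P_i,Q_i)$ for the strategy of agent $i$, the payoff \eqref{eqn:payoffs} is then exactly $f_i(\bfP,\bfQ) = \frac{1}{N-1}\sum_{j\neq i} u(s_i,s_j)$, and correspondingly $\Phi = \frac12\sum_{k=1}^N f_k = \frac{1}{N-1}\sum_{1\le k<l\le N} u(s_k,s_l)$, again using symmetry of $u$ to collapse the double sum.

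Next I would compute the effect of a unilateral deviation. Fix player $i$, a new strategy $s\in\Lmn$, a profile $\mathbf{s}=(s_1,\dots,s_N)$, and let $\mathbf{s}'$ denote the profile with $s_i$ replaced by $s$. The direct change is
\begin{equation}\nonumber
f_i(s,\mathbf{s}_{-i}) - f_i(\mathbf{s}) = \frac{1}{N-1}\sum_{j\neq i}\bigl(u(s,s_j) - u(s_i,s_j)\bigr).
\end{equation}
For any $j\neq i$, only the single term of $f_j$ involving agent $i$ is affected, so
\begin{equation}\nonumber
f_j(s,\mathbf{s}_{-i}) - f_j(\mathbf{s}) = \frac{1}{N-1}\bigl(u(s_j,s) - u(s_j,s_i)\bigr) = \frac{1}{N-1}\bigl(u(s,s_j) - u(s_i,s_j)\bigr),
\end{equation}
where the last equality uses the symmetry of $u$. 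Hence every one of the $N-1$ ``other'' players experiences exactly the same change as player $i$, and summing over all $N$ players,
\begin{equation}\nonumber
\Phi(s,\mathbf{s}_{-i}) - \Phi(\mathbf{s}) = \tfrac12\sum_{k=1}^N\bigl(f_k(s,\mathbf{s}_{-i}) - f_k(\mathbf{s})\bigr) = \tfrac12\cdot 2\bigl(f_i(s,\mathbf{s}_{-i}) - f_i(\mathbf{s})\bigr) = f_i(s,\mathbf{s}_{-i}) - f_i(\mathbf{s}),
\end{equation}
which is precisely the defining identity of a potential game. Alternatively, using the form $\Phi = \frac{1}{N-1}\sum_{k<l}u(s_k,s_l)$, one notes that replacing $s_i$ touches only the $N-1$ summands indexed by pairs containing $i$, giving the same conclusion in one line; I would include this as a remark.

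I do not expect a genuine obstacle: the statement is essentially the classical observation that symmetric pairwise games are potential games, and the computation is bookkeeping. The one place to be careful — and the only subtlety worth spelling out — is the \emph{indirect} effect, namely that a deviation by $i$ changes $f_j$ for every $j\neq i$ as well as $f_i$ itself; it is exactly the symmetry of the pairwise payoff $u$ that forces each of these indirect changes to equal the direct one, which is what makes the normalization factor $\tfrac12$ in $\Phi$ the correct one.
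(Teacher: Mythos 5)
Your proof is correct and follows essentially the same route as the paper's: both isolate the symmetric pairwise interaction and observe that a unilateral deviation by player $i$ changes the total payoff $\sum_{k}f_k$ by exactly twice the change in $f_i$, so that $\Phi=\frac12\sum_k f_k$ is an exact potential. One wording slip worth fixing: an individual $j\neq i$ does not experience ``exactly the same change as player $i$'' --- rather, the \emph{sum} of the changes over all $j\neq i$ equals player $i$'s own change (each $f_j$ picks up exactly one term of $f_i$'s deviation sum), which is what your displayed equations actually establish and what your final line $\frac12\cdot 2\bigl(f_i(s,\mathbf{s}_{-i})-f_i(\mathbf{s})\bigr)$ correctly uses.
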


\begin{proof}
   Let $(\textbf{P},\textbf{Q})$ and $(\hat{\textbf{P}},\hat{\textbf{Q}})$ differ only in the language of player $\hat{i}$. Then
\begin{align*}
2 \Phi(\mathbf{P},\mathbf{Q}) &=  \sum_i f_i(\mathbf{P},\mathbf{Q})\\
&= \sum_i \left(\tr (P_i \frac{1}{N-1} \sum_{j\not= i} Q_j)
+ \tr (\frac{1}{N-1} \sum_{j\not= i} P_j Q_i)\right)\\
&= \sum_{i\not= \hat{i}} \left(\tr (P_i \frac{1}{N-1} \sum_{j\not= i,\hat{i}} Q_j)
+ \tr (\frac{1}{N-1} \sum_{j\not= i,\hat{i}} P_j Q_i)\right)\\
&\quad + 2\left( \tr (P_{\hat{i}} \frac{1}{N-1} \sum_{j\not= \hat{i}} Q_j)
+ \tr (\frac{1}{N-1} \sum_{j\not= \hat{i}} P_j Q_i)\right)\\
&= \sum_{i\not= \hat{i}} \left(\tr (P_i \frac{1}{N-1} \sum_{j\not= i,\hat{i}} Q_j)
+ \tr (\frac{1}{N-1} \sum_{j\not= i,\hat{i}} P_j Q_i)\right)\\
&\quad + 2 f_i(\mathbf{P},\mathbf{Q}).
\end{align*}
A similar expression holds for $\Phi(\hat{\mathbf{P}},\hat{\mathbf{Q}})$. Accordingly,
$$\Phi(\mathbf{P},\mathbf{Q}) - \Phi(\hat{\mathbf{P}},\hat{\mathbf{Q}})
= f_{\hat{i}}(\mathbf{P},\mathbf{Q}) - f_{\hat{i}}(\hat{\mathbf{P}},\hat{\mathbf{Q}}),$$
as required.
 \end{proof}

Since the potential function is proportional to average fitness, it is not surprising that many stochastic evolutionary dynamics on language games tend to maximize average fitness. Indeed, from the perspective of distributed algorithm design problems of this form are well studied and generic procedures with strong performance guarantees exist. For instance, under logit dynamics players spend almost all of their time at maximizers of the potential function over the long run as a temperature parameter is sufficiently close to zero \cite{RePEc:eee:gamebe:v:5:y:1993:i:3:p:387-424}. However, logit bears little resemblance to the replicator dynamics studied in the non-atomic signaling game.

Many variations on logit have been suggested, motivated by concerns such as information and actuation constraints in engineering applications \cite{marden-shamma-08} or behavioral tendencies and rates of convergence \cite{DBLP:conf/sigmetrics/ShahS10}. Our paper is novel in its insistence on replicator-like dynamics (see also \cite{DBLP:KleinbergPT09}). The dynamics we suggest are analyzed only for the atomic signaling game. Characterizing the equilibrium selection properties and rates of convergence of such algorithms more generally is a future direction.

\section{Evolutionary Dynamics}\label{secdynamics}

\subsection{The model}

 As described above, our focus in this paper is the study of the evolution of language across a society. To describe this model, we first present a few definitions and notations.

Define the Hamming distance $\dham((P,Q),(P',Q'))$ between two languages $(P,Q)$ and $(P',Q')$ to be
 \begin{align}\label{eqn:hamming}
   \dham((P,Q),(P',Q'))=\frac{1}{4}\sum_{i=1}^m\sum_{j=1}^n(|P_{ij}-P'_{ij}|+|Q_{ij}-Q'_{ij}|).
 \end{align}
 Accordingly, let $\rho((P,Q),d)$ be the $d$-disk around the language $(P,Q)$
 \begin{align}\label{eqn:ddisk}
   \rho((P,Q),d)=\{(P',Q')\mid \dham((P,Q),(P',Q'))\leq d\},
 \end{align}
 where $d\geq 0$.

Our model for the evolution of the language in a society is based on the evolution of a random dynamical system that evolves at discrete time stages $t=0,1,\ldots$. Let $(\bfP[t],\bfQ[t])=((P_1[t],Q_1[t]),\ldots,(P_N[t],Q_N[t]))$ be the vector consisting of the languages of $N$ players (agents) in the society at time $t$. Our evolutionary model is as follows:

\begin{itemize}
\item At $t\geq 1$, each player $i$, randomly and independently of the other players and earlier choices of all players, chooses to revise her strategy with some probability $p_i\in(0,1)$.

\item If player $i$ is ``active'', i.e., chooses to revise her strategy, she will set
\begin{equation}\label{eqn:maindynamic}
(P_i[t],Q_i[t]) = \begin{cases}
(P_{\hat{k}},Q_{\hat{k}}), & \text{with probability } 1-\epsilon \\
\operatorname{rand}(\rho((P_i[t-1],Q_i[t-1]),d)), & \text{with probability } \epsilon
\end{cases}
\end{equation}
where
$$\hat{k} \in \argmax_k f_k(\textbf{P}[t-1],\textbf{Q}[t-1]);$$
$\operatorname{rand}(\cdot)$ indicates the outcome of uniform random sampling from the given set; and $d\geq 1$ is a fixed parameter. In the case of multiple maximizers, we choose $\hat{k}$ from $\argmax_k f_k(\textbf{P}[t-1],\textbf{Q}[t-1])$ at random uniformly.

\item The agents who choose not to revise their strategies leave their strategies unchanged, i.e.,
\begin{equation}\nonumber
(P_j[t],Q_j[t]) = (P_j[t-1],Q_j[t-1]) \quad \mbox{for all $j$ not being active at time $t$}.
\end{equation}
\end{itemize}

Let $\mathcal{P}_{m,n,d}^{\epsilon}$ denote these evolutionary dynamics. For notational simplicity, the initial condition is suppressed in this notation.

In words, when player $i$ updates her strategy (with probability $p_i$), the player either imitates the fittest (with probability $1-\epsilon$) or mutates to a nearby strategy (with probability $\epsilon$).  The distance $d$ determines how far a strategy can possibly mutate. For sufficiently large $d$, the mutation can be to an arbitrary strategy.


In contrast to replicator dynamics, reproductive opportunities are afforded to only the fittest players as opposed to reproduction in proportion to fitness. Rather, the dynamics reflect the feature of imitation.  Furthermore, unused strategies are not subsequently utilized except through (rare) mutations. We will consider a variant of these dynamics in Section \ref{sec:dynamic_var}, which allows for reproduction among more than the fittest agents. The analysis of those models is a straightforward extension of the results for the present model, and so we study the current model extensively in this section.

\subsection{Stochastically stable states}

The dynamics are random in that at each stage, the language spoken by an agent in the next stage is not specified deterministically, because of both the random decision whether or not to be active as well as the persistent possibility of mutations. Accordingly, we cannot discuss the long run properties in terms of the convergence of the state. Rather, our study focuses on characterizing the \textit{stochastically stable} states of this process (see the appendix). In particular, we will characterize the states that are occupied \textit{almost exclusively} in the long run for small mutation probability, $\epsilon$. On the other hand, states that are \textit{not} stochastically stable are visited with vanishingly small frequency.

The dynamics $\mathcal{P}_{m,n,d}^{\epsilon}$ constitute a time-homogenous Markov chain on the state space $\Lmn^N$.

\begin{proposition} The (Markov chain) dynamics $\mathcal{P}_{m,n,d}^{\epsilon}$ with $\epsilon > 0$ admit a unique stationary distribution.
\end{proposition}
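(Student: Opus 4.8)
The plan is to prove irreducibility of the time-homogeneous Markov chain $\Pmn$ on the finite state space $\Lmn^N$; since an irreducible Markov chain on a finite state space has a unique stationary distribution, this is enough. (Aperiodicity, which would additionally yield convergence to this distribution, also holds, since with probability $\prod_i(1-p_i) > 0$ no agent is active and the chain stays put; but aperiodicity is not needed for the stated claim, and indeed the imitation branch plays no role in the argument — $\epsilon > 0$ is the only hypothesis used.)

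First I would establish a connectivity statement for the language set $\Lmn$: equip $\Lmn$ with the graph in which $(P,Q)$ and $(P',Q')$ are joined whenever $0 < \dham((P,Q),(P',Q')) \le d$. Reassigning a single object to a different symbol changes one row of $P$ from one standard basis vector to another, which changes $\sum_{ij}|P_{ij}-P'_{ij}|$ by exactly $2$, hence changes the Hamming distance by exactly $\tfrac12 \le 1 \le d$; the same holds for a single-row edit of $Q$. Fixing up $P$ one row at a time and then $Q$ one row at a time, any language can be transformed into any other through a finite sequence of such edits, each intermediate language lying in the $d$-disk $\rho(\cdot,d)$ of its predecessor. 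Thus for any $\ell, \ell' \in \Lmn$ there is a path $\ell = \ell_0, \ell_1, \dots, \ell_L = \ell'$ with $\ell_{r+1} \in \rho(\ell_r,d)$ for each $r$.

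Next I would lift this to the product chain, editing agents one at a time. Fix a current state and a target state in $\Lmn^N$. Consider the event that exactly agent $1$ is active — probability $p_1\prod_{j\ne 1}(1-p_j) > 0$ — that agent $1$ takes the mutation branch of \eqref{eqn:maindynamic} rather than imitating — probability $\epsilon > 0$ — and that the uniform draw $\operatorname{rand}(\rho(\cdot,d))$ returns the next language along the path from agent $1$'s current language to her target; this last draw has positive probability since it is a uniform draw from a finite set containing that language. Iterating this finitely many times drives agent $1$ to her target while all other agents, being inactive, are unchanged; then repeat for agent $2$, agent $3$, and so on. The target state is reached from the initial state in finitely many steps, each of strictly positive probability, so the chain is irreducible and admits a unique stationary distribution. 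The one place needing care is the connectivity claim for $\Lmn$ — i.e., checking that $d \ge 1$ suffices for the mutation kernel alone to connect the state space — together with the bookkeeping that at each step the active set can be taken to be a singleton so that agents are edited independently.
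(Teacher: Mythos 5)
Your proof is correct and follows essentially the same route as the paper's: establish irreducibility of the finite-state chain (with aperiodicity noted via the positive probability of no agent being active) and conclude that a unique stationary distribution exists. The paper's own proof asserts irreducibility in one line from $p_i>0$, whereas you explicitly verify the step it glosses over --- that the mutation kernel with $d\ge 1$ connects $\Lmn$ through single-row edits of Hamming distance $\tfrac12\le d$, lifted to $\Lmn^N$ by activating one agent at a time --- so your version is the same argument with the connectivity bookkeeping made precise.
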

\begin{proof}
Since all revision probabilities satisfy $p_i > 0$, it is possible from any state to transition to any other  state in a finite number of stages.  Furthermore, since all revision probabilities satisfy $p_i < 1$, there is always a positive probability that the state of the Markov chain remains unchanged from one stage to the next. Therefore, such a Markov chain is irreducible and aperiodic and hence, it admits a unique stationary distribution.
\end{proof}

Let $\mu_\epsilon$ denote the stationary distribution under mutation probability $\epsilon$. The vector, $\mu_\epsilon$, is a probability distribution over the \textit{set} $\Lmn^N$. The component $\mu_\epsilon(\mathbf{P},\mathbf{Q})$ denotes the steady state probability of being in the state $(\mathbf{P},\mathbf{Q})\in\Lmn^N$.

A state $(\mathbf{P},\mathbf{Q})$ is stochastically stable if
$$\lim_{\epsilon \rightarrow 0} \mu_\epsilon(\mathbf{P},\mathbf{Q}) > 0.$$
As discussed in the introduction, the notion of stochastic stability has played a major role in the characterization of limiting behaviors of evolutionary processes, particularly regarding the evolution of convention (see the monograph \cite{Young_ISSS} and references therein).
Our aim is to characterize the set of stochastically stable states.

Towards this end, define a \textit{homogenous state} as a state in which all players use a single language, so that
$$(\mathbf{P},\mathbf{Q}) = \Big( (P,Q),...,(P,Q) \Big)$$
for some $(P,Q)\in\Lmn$.

Let $\mathcal{P}_{m,n,d}^{0}$ denote the dynamics under zero mutation probability, i.e., $\epsilon=0$. By construction, a homogenous state is absorbing. That is, once the dynamics reach a homogenous state---in the absence of mutations---it must remain in that state. Furthermore, it is possible to reach a homogenous state in one step from any non-homogenous state by all agents imitating the fittest strategy. Accordingly, the set of homogenous states are the recurrent communication classes of $\mathcal{P}_{m,n,d}^{0}$.

Some homogenous states result in higher average societal fitness than others. If a language $(P,Q)$ satisfies $\tr (PQ) = \min \{m,n\}$, then it is among the maximally efficient languages. We call such a language \textit{aligned}. Likewise, if $\tr (PQ) <\min \{m,n\}$, we say that $(P,Q)$ is an \textit{unaligned} language. Now define $\mathcal{O}$, the set of \textit{optimal} states, as the set of homogenous states with aligned languages. As intended, an optimal state maximizes average societal fitness, defined by
\begin{equation}\nonumber
W(\textbf{P},\textbf{Q}) = \frac{1}{N}\sum_{i=1}^N f_i(\textbf{P},\textbf{Q}).
\end{equation}

Our main result is that regardless of the choice of parameter $d\geq 1$, the set of stochastically stable states are precisely the set of optimal states.

\begin{theorem}\label{thm:mainm=n}
   A state $(\mathbf{P},\mathbf{Q})$ is stochastically stable if and only if it belongs to $\mathcal{O}$, the set of optimal states.
 \end{theorem}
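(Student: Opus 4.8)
The plan is to use the standard machinery of stochastic stability for perturbed Markov chains (as reviewed in the appendix): the stochastically stable states are exactly those contained in the recurrent classes of the unperturbed chain $\mcP_{m,n,d}^0$ that minimize the \emph{stochastic potential}, i.e.\ the minimum total resistance of a rooted spanning tree in the graph whose vertices are the recurrent classes and whose edge weights are the ``costs'' (number of mutations) of the least-cost transitions between basins. As already observed in the excerpt, the recurrent classes of $\mcP_{m,n,d}^0$ are precisely the homogeneous states. So the theorem reduces to showing that among homogeneous states, the minimum-stochastic-potential ones are exactly the optimal (aligned) states $\mathcal{O}$. I would first record the one-mutation transition structure: from a homogeneous state $h=((P,Q),\ldots)$, a single mutation of one agent produces a state with $N-1$ agents speaking $(P,Q)$ and one speaking some $(P',Q')\in\rho((P,Q),d)$; from there, with zero further mutations, the unperturbed dynamics either returns to $h$ or, if the mutant agent happens to be (a) fittest, flows to the homogeneous state $h'=((P',Q'),\ldots)$. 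Thus the resistance $r(h\to h')$ is $1$ whenever $(P',Q')\in\rho((P,Q),d)$ and the mutant is weakly fitter than the incumbents in the one-mutant state, and in general $r(h\to h')$ equals the least number of mutants that must be ``injected'' so that some agent speaking $(P',Q')$ becomes a maximizer of fitness and the imitation step sweeps the population to $h'$.

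The key lemma I would isolate is a \emph{monotonicity / one-step improvement} statement driven by the potential-game structure (Theorem~\ref{thm:potential}): since $\Phi=\tfrac12\sum_i f_i = \tfrac{N}{2}W$, and in a one-mutant state the mutant's fitness relative to the incumbents' fitness is governed by $\tr(P'Q')$ versus $\tr(PQ)$ up to $O(1/(N-1))$ lower-order terms, a single mutation toward a \emph{strictly more efficient} neighbouring language is enough (for $N$ large, or more carefully for all $N\ge 2$ after checking the arithmetic) to make the mutant a strict best responder, so $r(h\to h')=1$ whenever $(P',Q')\in\rho((P,Q),d)$ and $\tr(P'Q')>\tr(PQ)$. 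Conversely, escaping an \emph{aligned} homogeneous state costs strictly more than one mutation: a lone mutant to a distinct language has $\tr(P'Q')\le\min\{m,n\}=\tr(PQ)$, so it is never \emph{strictly} fitter, and one checks it cannot be weakly fittest-and-selected either (here the tie-breaking and the exact payoff formula \eqref{eqn:payoffs} must be handled), giving resistance $\ge 2$ out of every optimal state. I would also argue $\ge 1$ everywhere (trivial) and that from any unaligned homogeneous state there is a path of single-mutation steps, each of cost $1$, that strictly increases $\tr(PQ)$ until an aligned language is reached — this uses $d\ge 1$ so that each ``flip'' toward alignment lies in the $d$-disk, and uses that any unaligned $(P,Q)$ has a Hamming-distance-$\le d$ neighbour (in fact a single coordinate change suffices to raise the trace) that is strictly more efficient.

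With these resistance estimates in hand, the tree-surgery argument is routine: take any minimum spanning tree rooted at some recurrent class; if its root is an unaligned state $h_u$, follow the cost-$1$ improving path from $h_u$ to some aligned $h^*\in\mathcal{O}$, and re-root the tree at $h^*$ by reversing the edges along that path — each reversed edge is replaced by an edge of cost $1$, while the deleted edges out of the aligned states on the path had cost $\ge 2$, so the total resistance strictly decreases, a contradiction. Hence every minimum tree is rooted in $\mathcal{O}$. To finish ``only if'', I must also show \emph{no} aligned state has strictly larger stochastic potential than another — i.e.\ all of $\mathcal{O}$ is selected — which follows because the $\operatorname{rand}$-mutation and imitation steps let one move between \emph{any} two aligned homogeneous states through a path of cost-$1$ steps (relabelling symbols/objects one flip at a time keeps the trace at $\min\{m,n\}$ only if $d$ is large, so in general one routes through a slightly-unaligned intermediate and back up, each step cost $1$ in the appropriate direction); the symmetric re-rooting argument then equalizes their potentials. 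The main obstacle I anticipate is precisely this last point together with the finite-$N$ bookkeeping in the fitness comparison \eqref{eqn:payoffs}: verifying that ``one mutation toward higher $\tr(PQ)$'' really does make the mutant a best responder for \emph{every} $N\ge 2$ (not just asymptotically), and that one mutation out of an aligned state \emph{never} does, including the tie-break case where $\tr(P'Q')=\tr(PQ)=\min\{m,n\}$ — that comparison has to be done by hand from \eqref{eqn:payoffs}, and it is where the argument could need an extra hypothesis or a more careful neighbourhood analysis.
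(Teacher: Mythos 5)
Your high-level architecture coincides with the paper's (recurrent classes of $\mathcal{P}_{m,n,d}^{0}$ are the homogeneous states; compute one-mutation resistances; do tree surgery to rule out unaligned roots; then argue all aligned states are selected), but your key lemma is false as stated, and the reason is a misreading of the fitness function \eqref{eqn:payoffs}. You assert $r(h\to h')=1$ whenever $(P',Q')\in\rho((P,Q),d)$ and $\tr(P'Q')>\tr(PQ)$, on the grounds that the lone mutant's fitness advantage is ``governed by $\tr(P'Q')$ versus $\tr(PQ)$.'' It is not: the sum in \eqref{eqn:payoffs} runs over $j\neq i$, so a lone mutant never communicates with herself, and her fitness equals exactly $\tr(P'Q)+\tr(PQ')$ --- pure cross-terms with the incumbent language, with no $\tr(P'Q')$ contribution at all. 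The correct one-mutation criterion is the paper's Claim~\ref{claim:onemutation}: $\tr(PQ)\le\tfrac12\bigl(\tr(P\tilde Q)+\tr(\tilde P Q)\bigr)$. A concrete counterexample to your lemma: for $m=n=3$ let $P$ send objects $1,2,3$ to symbols $1,2,2$ and $Q=I$, so $\tr(PQ)=2$, and let $(P',Q')$ be the cyclic-shift permutation language, so $\tr(P'Q')=3$; then $\tr(PQ')=\tr(P'Q)=0$, the mutant is the strictly least fit agent, and she is never imitated, so the resistance of that transition is not $1$ for any $d$. The paper's Claim~\ref{claim:complement} repairs precisely this by choosing the trace-increasing neighbour to differ from $(P,Q)$ only in a row of $P$ and a row of $Q$ that do not contribute to $\tr(PQ)$, which preserves the cross-terms and forces \eqref{eqn:inequality}; your proposal needs that specific construction, not merely ``some neighbour with larger trace.''

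The second genuine gap is in your converse direction. For $m=n$, leaving an aligned homogeneous state with one mutation requires $\tr(PQ')=\tr(P'Q)=m$, which (since an aligned $P$ is a permutation) forces $(P',Q')=(P,Q)$; hence \emph{every} edge out of an aligned state has resistance at least $2$, and there are no cost-$1$ paths between distinct aligned states. Your fallback of ``routing through a slightly-unaligned intermediate'' then makes the re-rooting inequality read $\gamma(R_j)\le\gamma(R_i)+r_{ij}-r(e)$ with both $r_{ij}$ and $r(e)$ at least $2$ and no guaranteed cancellation, so it does not equalize the stochastic potentials of the aligned states. The paper avoids this entirely with a symmetry argument (Claim~\ref{claim:symmetry}): relabelling objects via a permutation matrix, $(P,Q)\mapsto(\mathscr{P}P,Q\mathscr{P}^{-1})$, is an automorphism of the perturbed chain that preserves all resistances and acts transitively on aligned languages when $m=n$; since some state must be stochastically stable and only aligned ones can be, all aligned states are. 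You should adopt that argument (or supply a genuinely new resistance accounting) to close the converse.
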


The remainder of this section is devoted to the proof of Theorem~\ref{thm:mainm=n}. We will proceed following arguments outlined in the background appendix.

We first show that a suboptimal state cannot be a stochastically stable state by showing that every tree rooted in a suboptimal state can be rewired to a tree rooted in an optimal state with lower resistance. This construction in itself shows that the only states that can be stochastically stable are a subset of the optimal states. Then, using the rich set of symmetries of the dynamics, we show that indeed any optimal state is a stochastically stable state.

Let us first investigate the structure of the edges with resistance one in $\Pmn$. In words, we wish to show when a homogenous state $(P,Q)^N$ can transition to another homogenous state $(\tilde{P},\tilde{Q})$ with positive probability after only a single mutation. The transition need not occur in a single stage. At stage $t=0$, the initial condition is the homogenous state $(P,Q)^N$.
At stage $t=1$, there is a single mutation by a single agent. Afterwards, the mutation free dynamics $\mathcal{P}_{m,n,d}^{0}$ complete the transition so that at some time $T>1$, the state is
$(\tilde{P},\tilde{Q})$.
{
\begin{claim}\label{claim:onemutation}
   A homogenous state $i=(\mathbf{P},\mathbf{Q})=(P,Q)^N$ can transition to another homogenous state $j=(\tilde{\mathbf{P}},\tilde{\mathbf{Q}})=(\tilde{P},\tilde{Q})^N$ with one mutation if and only if $(\tilde{P},\tilde{Q})\in \rho((P,Q),d)$ and $(\tilde{P},\tilde{Q})$ satisfies
 \begin{align}\label{eqn:inequality}
   \tr(PQ)\leq \frac{1}{2}{\tr(P\tilde{Q}+\tilde{P}Q)}.
 \end{align}
 \end{claim}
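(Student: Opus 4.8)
The plan is to reduce everything to the single population configuration that arises immediately after the mutation, and then track the mutation-free imitation dynamics from there. From the homogeneous state $i=(P,Q)^N$ nothing changes before a mutation occurs (with everyone speaking $(P,Q)$, any imitation leaves the state fixed), so we may assume the lone mutation is applied to an agent---say agent $1$---while the state is still $(P,Q)^N$; hence agent $1$ moves to some $(\hat P,\hat Q)\in\rho((P,Q),d)$ and the population is in the configuration $s$ in which agent $1$ speaks $(\hat P,\hat Q)$ and agents $2,\dots,N$ speak $(P,Q)$. Because no further mutation is available, $\mathcal{P}_{m,n,d}^{0}$ only ever propagates languages already present, so the set of languages in the population stays inside $\{(P,Q),(\hat P,\hat Q)\}$ forever; reaching $(\tilde P,\tilde Q)^N\neq i$ therefore forces $(\tilde P,\tilde Q)=(\hat P,\hat Q)$, which already yields the disk condition $(\tilde P,\tilde Q)\in\rho((P,Q),d)$. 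It thus remains to decide exactly when the mutation-free dynamics started at $s$ (with $(\hat P,\hat Q)=(\tilde P,\tilde Q)$) can reach the homogeneous state $(\tilde P,\tilde Q)^N$.

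The second step is the fitness bookkeeping in state $s$. Using \eqref{eqn:payoffs} together with the observations that in $s$ agent $1$ interacts with the average language $(P,Q)$ while each agent $i\geq 2$ interacts with the average of one copy of $(\tilde P,\tilde Q)$ and $N-2$ copies of $(P,Q)$, one obtains $f_1(s)=\tr(\tilde P Q+P\tilde Q)$ and $f_i(s)=\frac{1}{N-1}\big(\tr(\tilde P Q+P\tilde Q)+2(N-2)\tr(PQ)\big)$ for $i\geq 2$. A one-line manipulation then shows, for $N\geq 3$, that $f_1(s)\geq f_i(s)$ if and only if $\tr(PQ)\leq\tfrac12\tr(P\tilde Q+\tilde P Q)$, i.e.\ exactly \eqref{eqn:inequality}; equivalently, \eqref{eqn:inequality} holds iff the mutant (agent $1$) lies in $\argmax_k f_k(s)$, iff $(\tilde P,\tilde Q)$ is among the fittest languages in the configuration $s$. (We take $N\geq 3$; for $N=2$ the two agents always have identical fitness and the comparison degenerates.)

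For the ``if'' direction I would exhibit one explicit positive-probability sample path using exactly one mutation: from $(P,Q)^N$ let only agent $1$ be active and let it mutate to $(\tilde P,\tilde Q)$---possible since $(\tilde P,\tilde Q)\in\rho((P,Q),d)$---reaching $s$; then from $s$ let agents $2,\dots,N$ be active while agent $1$ is inactive, with each active agent imitating agent $1$, which is a legitimate imitation target because agent $1\in\argmax_k f_k(s)$ by \eqref{eqn:inequality}. This lands in $(\tilde P,\tilde Q)^N$ in a single stage. Every step has positive probability: each $p_i\in(0,1)$, we have $\epsilon>0$ with $\rho((P,Q),d)$ finite, and imitation of a prescribed maximizer is an outcome of positive probability.

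For the ``only if'' direction the disk condition was obtained in the first step, so suppose \eqref{eqn:inequality} fails. Then in $s$ we have $f_1(s)<f_i(s)$ for all $i\geq 2$, so $\argmax_k f_k(s)=\{2,\dots,N\}$ and the unique fittest language in $s$ is $(P,Q)$; hence every active agent can only adopt $(P,Q)$, so from $s$ the chain either remains at $s$ or passes to the absorbing state $(P,Q)^N$ and never leaves $\{s,(P,Q)^N\}$---in particular it never reaches $(\tilde P,\tilde Q)^N$, a contradiction. I expect the main obstacle to be precisely this ``no mutation in reserve'' bookkeeping: making rigorous that the newcomer language $(\tilde P,\tilde Q)$ either sweeps the entire population in one wave---which is possible exactly when it is weakly fittest the instant it appears---or is eliminated, together with getting the algebra of the fitness comparison right; ties (equality in \eqref{eqn:inequality}) cause no trouble, since for the ``if'' part only a single positive-probability path is needed.
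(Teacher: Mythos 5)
Your proof is correct and follows essentially the same route as the paper's: compute the fitness of the mutant versus an incumbent in the post-mutation configuration, observe that the sign of their difference is governed by $\tfrac{N-2}{N-1}\bigl(2\tr(PQ)-\tr(P\tilde{Q}+\tilde{P}Q)\bigr)$, and conclude that the one-mutation transition is possible exactly when the mutant is weakly fittest. You are in fact somewhat more careful than the paper on the ``only if'' side (making explicit that the mutation-free dynamics can only propagate languages already present, so the chain is confined to $\{s,(P,Q)^N\}$ when the inequality fails) and in flagging the degenerate case $N=2$, where the fitness comparison is vacuous and the stated equivalence requires $N\geq 3$.
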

 \begin{proof}
   Consider the state
   \[(\mathbf{P},\mathbf{Q})^{(1)}=((P,Q),\ldots,(P,Q),(\tilde{P},\tilde{Q})),\]
   which is resulted from the homogeneous state $(P,Q)^N$ by one mutation of a language $(P,Q)$ to $(\tilde{P},\tilde{Q})\in \rho((P,Q),d)$.

Then
$$\Delta =  f_1((\mathbf{P},\mathbf{Q})^{(1)})-f_N((\mathbf{P},\mathbf{Q})^{(1)}).$$
would be the difference between the fitness of the first agent, a user of $(P,Q)$,
and the last agent, the user of $(\tilde{P},\tilde{Q})$, in the state $(\mathbf{P},\mathbf{Q})^{(1)}$.

The state $(\mathbf{P},\mathbf{Q})=(P,Q)^N$ can transition to $(\tilde{\mathbf{P}},\tilde{\mathbf{Q}})=(\tilde{P},\tilde{Q})^N$ with one mutation if and only if $(\tilde{P},\tilde{Q})\in \rho((P,Q),d)$ and $\Delta^{(1)}\leq 0$ because, then there would be a positive chance that in the next time step, all the users of $(P,Q)$ will revise their strategies at the next time step and adapt the fittest language $(\tilde{P},\tilde{Q})$.

By definition, the fitness of the first agent is
\begin{align*}
f_1((\mathbf{P},& \mathbf{Q})^{(1)})= \frac{1}{N-1}\left(
2(N-2)\tr(PQ) + \tr(P\tilde{Q}) + \tr(\tilde{P}Q)\right).
\end{align*}
Likewise, the fitness of the last agent is
$$f_N((\mathbf{P},\mathbf{Q})^{(1)}) = \frac{1}{N-1}\left(
(N-1)\tr(\tilde{P}Q) + (N-1)\tr(P\tilde{Q})\right).$$
Straightforward arguments imply that
   \begin{align}\label{eqn:difference}
     \Delta     &=
     \frac{1}{N-1}\left((N-2)\tr(2PQ-(P\tilde{Q}+\tilde{P}Q))\right).
   \end{align}
Condition (\ref{eqn:inequality}) is equivalent to $\Delta\le 0$, and hence, the result follows.
\end{proof}}

 Next, we show that for any unaligned language $(P,Q)$ and any $d\geq 1$, there exists a language $(\tilde{P},\tilde{Q})\in\rho((P,Q),d)$ such that $\eqref{eqn:inequality}$ holds.

 \begin{claim}\label{claim:complement}
  Let $(P,Q)$ be a language with $\tr(PQ)<\min(m,n)$. For any $d\geq 1$, there exists $(\tilde{P},\tilde{Q})\in \rho((P,Q),d)$ such that \eqref{eqn:inequality} holds. Furthermore, $\tr(\tilde{P}\tilde{Q})=\tr(PQ)+1$.
\end{claim}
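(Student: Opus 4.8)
The plan is to construct the desired language $(\tilde P,\tilde Q)$ explicitly by a local ``repair'' move on $(P,Q)$. Since $\tr(PQ)<\min(m,n)$, the language is unaligned, so the partial matching between objects and symbols induced by reading $P$ and $Q$ in agreement is not as large as it could be. Concretely, $\tr(PQ)=\sum_i P_{i,\sigma(i)}Q_{\sigma(i),i}$ where $\sigma$ is the speaking map (so $P_{i,\sigma(i)}=1$), and this counts the objects $i$ that are ``round-trip correct'': $\tau(\sigma(i))=i$ where $\tau$ is the hearing map. Unalignment means the set of correct objects has size $\tr(PQ)<\min(m,n)$, so there is at least one incorrect object, and moreover there is ``room'' (an unused symbol or an under-used symbol, and a non-fixed object) to create one more correct round trip. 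The first step is to make this combinatorial picture precise: identify an object $i^\ast$ with $\tau(\sigma(i^\ast))\neq i^\ast$, and show that one can choose either a new speaking target for $i^\ast$ or a new hearing target for some symbol so that exactly one Hamming unit of change (in $P$ or in $Q$) produces a language $(\tilde P,\tilde Q)$ with one additional correct round trip and all previously correct round trips preserved; this gives $\tr(\tilde P\tilde Q)=\tr(PQ)+1$ and $\dham((P,Q),(\tilde P,\tilde Q))=1\le d$.

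The second step is to verify inequality \eqref{eqn:inequality}, i.e. $\tr(PQ)\le\frac12\tr(P\tilde Q+\tilde P Q)$. Here I would use that the change is a single Hamming unit. If we only modified one row of $P$ (changing $\sigma(i^\ast)$ to some new symbol $s$), then $\tilde P Q$ differs from $PQ$ only in the $i^\ast$ diagonal entry, and $P\tilde Q=PQ$, so $\frac12\tr(P\tilde Q+\tilde P Q)=\tr(PQ)+\frac12(Q_{s,i^\ast}-Q_{\sigma(i^\ast),i^\ast})=\tr(PQ)+\frac12 Q_{s,i^\ast}\ge\tr(PQ)$, using that the old contribution $Q_{\sigma(i^\ast),i^\ast}$ was $0$ (object $i^\ast$ was incorrect) — provided we pick $s$ with $\tau(s)=i^\ast$ (i.e. $Q_{s,i^\ast}=1$). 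Symmetrically for a one-unit change in $Q$. So \eqref{eqn:inequality} holds with the left side strictly less than the right in the favorable case, and the real content is ensuring such a symbol $s$ (or such a hearing repair) exists.

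That existence is the main obstacle. It requires a short case analysis: either some symbol $s$ already satisfies $\tau(s)=j$ for some object $j$ that is not currently hit correctly by any object (then retarget $\sigma$ of any incorrect object to $s$), or no such symbol exists, in which case every symbol's hearing target is an object that is already correctly matched — but since fewer than $\min(m,n)$ objects are correct, there must be an incorrect object $j^\ast$ and, by a pigeonhole/counting argument on symbols versus correctly-matched objects, a symbol $s$ whose current hearing target can be changed to $j^\ast$ without breaking an existing correct round trip (change one row of $Q$ instead). I would organize this as: (i) if $\tr(PQ)<n$ there is a symbol not used as $\sigma(i)$ for any correct $i$, giving slack on the speaking side; (ii) if $\tr(PQ)<m$ there is an object not equal to $\tau(s)$ for any symbol used correctly, giving slack on the hearing side; since $\tr(PQ)<\min(m,n)$ both hold, and at least one of the two repair moves is available and achieves both the $+1$ trace increase and \eqref{eqn:inequality}. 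Handling the bookkeeping so that the modified row is genuinely a single Hamming unit away (binary row-stochasticity is preserved automatically since we swap which entry of a row is $1$) and that no previously correct diagonal term of $PQ$ is destroyed is the delicate part; everything else is the routine trace computation sketched above.
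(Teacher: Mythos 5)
There is a genuine gap: your construction insists on modifying a single row of exactly one of the two matrices (either $P$ or $Q$, but not both), and your case analysis concludes that ``at least one of the two repair moves is available.'' That conclusion is false. Take $m=n=3$ with every row of $P$ equal to $(1,0,0)$ and every row of $Q$ equal to $(1,0,0)$, so $\tr(PQ)=1<3$. A one-row change of $P$ alone sends some object $i^\ast$ to a new symbol $s$ and replaces the diagonal contribution $Q_{\sigma(i^\ast),i^\ast}$ by $Q_{s,i^\ast}$; here $Q_{s,i}=0$ for every $s$ and every $i\neq 1$, so no such move increases the trace. Symmetrically, a one-row change of $Q$ alone cannot increase the trace, because every object speaks symbol $1$, so only row $1$ of $Q$ enters the diagonal of $P\tilde{Q}$ and retargeting it merely relocates the single correct round trip. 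Your two ``slack'' observations (a symbol not used by any correct object, an object not heard by any correctly-used symbol) are both true, but neither yields a one-sided repair: the unused symbol $s$ need not satisfy $\tau(s)=i^\ast$ for an incorrect object $i^\ast$, and the unmatched object need not have its spoken symbol outside the set of correctly-used symbols.

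The paper's proof avoids this by changing one row of $P$ \emph{and} one row of $Q$ simultaneously: pick an object $i'$ not contributing to $\tr(PQ)$ and a symbol $j'$ not used by any contributing object (both exist since $\tr(PQ)<\min(m,n)$), and set $\tilde{P}_{i'j'}=1$ and $\tilde{Q}_{j'i'}=1$. Under the normalization in \eqref{eqn:hamming} (the factor $\tfrac14$), this double row-swap still has $\dham\le 1$, so it lies in $\rho((P,Q),d)$ for every $d\geq 1$; note that under the same normalization your one-sided swap has distance $\tfrac12$, not $1$. The trace gains exactly one because $(i',j')$ forms a new round trip while no old one is disturbed, and \eqref{eqn:inequality} follows from $\tr(P\tilde{Q})\geq\tr(PQ)$ and $\tr(\tilde{P}Q)\geq\tr(PQ)$, each holding because the modified rows do not touch any contributing term. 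Your trace computation for the inequality is fine as far as it goes, but the existence step it rests on --- which you correctly flag as ``the main obstacle'' --- does not hold as stated, and the two-sided move is what resolves it.
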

\begin{proof}
It is sufficient to prove the claim for $d=1$ as $\rho((P,Q),1)\subseteq \rho((P,Q),d)$ for any $d\geq 1$.

Let $A=\{i\in \{1,\ldots,m\} \mid \sum_{j}P_{ij}Q_{ji}=1\}$, and $B=\{j\in \{1,\ldots,m\} \mid P_{ij}=1, i\in A\}$. In other words, $A$ is the set of the objects that are contributing to $\tr(PQ)$ and $B$ is the set of symbols corresponding to the set of objects in $A$. Since, $P$ is a row stochastic and binary matrix, it follows that $|A|=|B|=\tr(PQ)<m$. Therefore, there exists an object $i'\in \{1,\ldots,m\}\setminus A$ and a symbol $j'\in \{1,\ldots,n\}\setminus B$. Now, define matrices $\tilde{P}$ and $\tilde{Q}$ as follows:
  \begin{align}\label{eqn:deftP}
    \tilde{P}_{ij}=\left\{
    \begin{array}{ll}
      P_{ij}&\mbox{if $i\not= i'$}\\
      1& \mbox{if $i=i'$ and $j=j'$}\\
      0& \mbox{if $i=i'$ and $j\not= j'$;}
    \end{array}
    \right.,
  \end{align}
   and similarly,
  \begin{align}\label{eqn:deftQ}
    \tilde{Q}_{ji}=\left\{
    \begin{array}{ll}
      Q_{ji}&\mbox{if $j\not=j'$},\\
      1& \mbox{if $j=j'$ and $i=i'$}\\
      0& \mbox{if $j=j'$ and $i\not= i'$.}
    \end{array}
    \right. .
  \end{align}
By construction, both  $\tilde{P}$ and $\tilde{Q}$ are binary and stochastic matrices. Also, $\tr(\tilde{P}\tilde{Q})=\tr(PQ)+1$. Since $\tilde{P}$ differs from $P$ at most at the $i$th row and $\tilde{Q}$ is different from $Q$ at most at the $j$th row, and since the matrices are binary, it follows that $\dham((P,Q),(\tilde{P},\tilde{Q}))\le 1$.

It remains to establish the inequality \eqref{eqn:inequality}. All of the columns of $Q$ contributing to $\tr(PQ)$ are equal to the corresponding columns of $Q'$ and hence, $\tr(PQ)\leq \tr(PQ')$. Similarity, all of the rows of $P$ contributing to $\tr(PQ)$ are equal to the corresponding rows of $P'$, and hence $\tr(PQ)\leq \tr(P'Q)$. Therefore $(\tilde{P},\tilde{Q})$ satisfies \eqref{eqn:inequality}.
\end{proof}

Now, we are ready to prove that the only stochastically stable states of $\Pz$ are the optimal states.
\begin{claim}\label{claim:forward}
  For any $d\geq 1$, a homogenous state $(P,Q)^N$ is stochastically stable in $\Pz$ only if $(P,Q)$ is an aligned language.
\end{claim}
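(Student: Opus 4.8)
The plan is to use the tree-surgery (minimum-spanning-tree) characterization of stochastic stability recalled in the appendix: a recurrent class of $\Pz$ (here, a homogenous state) is stochastically stable only if it is the root of some minimum-resistance tree over the recurrent classes. So I would argue by contradiction: suppose $(P,Q)^N$ is stochastically stable with $(P,Q)$ unaligned, i.e.\ $\tr(PQ)<\min\{m,n\}$. Fix a minimum-resistance tree $\mathcal{T}$ rooted at $(P,Q)^N$, and I will exhibit a tree of strictly smaller total resistance rooted elsewhere, contradicting minimality.

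The key surgical move uses Claim~\ref{claim:complement}: since $(P,Q)$ is unaligned, there is a language $(\tilde P,\tilde Q)\in\rho((P,Q),d)$ with $\tr(\tilde P\tilde Q)=\tr(PQ)+1$ and satisfying inequality \eqref{eqn:inequality}, so by Claim~\ref{claim:onemutation} the edge $(P,Q)^N\to(\tilde P,\tilde Q)^N$ has resistance $1$. Now form $\mathcal{T}'$ by adding this edge and deleting the unique outgoing edge of $(P,Q)^N$ in $\mathcal{T}$; this yields a spanning tree rooted at $(\tilde P,\tilde Q)^N$, and its resistance is at most $\mathrm{res}(\mathcal{T})-1+1=\mathrm{res}(\mathcal{T})$, since every edge out of a homogenous state costs at least $1$ (any escape requires at least one mutation). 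To make the inequality \emph{strict}, I would choose the tree more carefully: either observe that the deleted outgoing edge of $(P,Q)^N$ in a minimum-resistance tree must itself have resistance $\geq 1$, while the added edge has resistance exactly $1$, so if that deleted edge has resistance $\geq 2$ we are done; or, more robustly, iterate the construction. Iterating, one walks $(P,Q)^N \to (P^{(1)},Q^{(1)})^N \to \cdots$ with $\tr$ strictly increasing at each step by $1$, each step a resistance-$1$ edge, until reaching an aligned language $(P^{*},Q^{*})^N$ after exactly $\min\{m,n\}-\tr(PQ)$ steps. Splicing in this whole path (and deleting the original outgoing edge of $(P,Q)^N$, plus any edges that would create cycles, redirecting as needed) gives a tree rooted at an aligned state. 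The accounting: I removed the edge out of $(P,Q)^N$ (cost $\geq 1$) and potentially re-rooted, but the net comparison reduces to showing the original root had \emph{some} escape route that is ``no cheaper'' than the path I insert — and here the symmetry/efficiency argument is that from any homogenous state, reaching the (strictly fitter) neighboring homogenous state is the cheapest possible escape (resistance $1$), so no rewiring can lose.

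\textbf{Main obstacle.} The delicate point is the bookkeeping of the tree surgery so as to get a \emph{strict} decrease in resistance rather than merely $\leq$. Simply adding one resistance-$1$ edge and deleting one edge of resistance $\geq 1$ only gives $\leq$. The cleanest fix is: in the minimum tree $\mathcal{T}$ rooted at the unaligned $(P,Q)^N$, the path from the aligned state $(P^{*},Q^{*})^N$ back to the root must traverse at least one edge, and I claim it can be ``reversed'' along the efficiency gradient at no extra cost while the reverse direction (descending in $\tr$) requires a mutation that is \emph{never} forced — because a homogenous state with a strictly fitter mutant present is left only by imitation, never needing to mutate ``downhill'', whereas escaping the unaligned root \emph{does} cost a mutation. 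Formalizing ``the cheapest escape from a homogenous unaligned state is the uphill step, and the aligned state's basin strictly dominates'' is where care is needed; I would handle it by comparing, edge-by-edge along the unique $\mathcal{T}$-path from the chosen aligned state to the root, the cost of each edge with the cost of its reversal, using Claim~\ref{claim:onemutation} to show reversals of uphill edges are free while the root contributes a strict surplus. Everything else — existence of the uphill neighbor, that homogenous states are the recurrent classes of $\Pz$ (already established), and the resistance-$1$ computation — is supplied by the earlier claims.
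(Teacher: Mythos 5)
Your overall strategy (surgery on a minimum-resistance tree, using Claims~\ref{claim:onemutation} and \ref{claim:complement} to produce a resistance-one ``uphill'' edge out of the unaligned root) is the same as the paper's, but the surgery itself is misspecified and the crucial strict inequality is never established. First, the root $(P,Q)^N$ of $\mathcal{T}$ has no outgoing edge, so ``deleting the unique outgoing edge of $(P,Q)^N$'' is not a well-defined move. Second, as you yourself observe, trading one added resistance-one edge against one deleted edge of resistance $\geq 1$ only yields $\mathrm{res}(\mathcal{T}')\leq\mathrm{res}(\mathcal{T})$, which does not contradict anything: Theorem~\ref{thm:young} awards stochastic stability to \emph{every} root of minimum stochastic potential, so a tie proves nothing. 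Your two proposed repairs --- iterating the uphill path all the way to an aligned state, or reversing the $\mathcal{T}$-path edge by edge --- are not carried out and do not obviously close the gap: splicing in a length-$\ell$ path of total resistance $\ell$ while deleting $\ell$ edges each of resistance $\geq 1$ again gives only $\leq$, and the $\mathcal{T}$-path from an aligned state back to the root need not be monotone in $\tr$, so ``reversals of uphill edges are free'' does not account for all of its edges.

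The missing idea is \emph{where to cut}. Since $\tr(\tilde{P}\tilde{Q})\geq\tr(PQ)+1$ while the root's language has trace $\tr(PQ)$, the unique path in $\mathcal{T}$ from $(\tilde{P},\tilde{Q})^N$ down to $(P,Q)^N$ must contain an edge $(P^{\rm h},Q^{\rm h})^N\to(P^{\rm t},Q^{\rm t})^N$ on which the trace first drops from $\geq\tr(PQ)+1$ to $\leq\tr(PQ)$; by Claim~\ref{claim:onemutation} such a strictly trace-decreasing transition between homogeneous states cannot be realized with a single mutation, so this edge has resistance at least two. Deleting \emph{that} edge and adding the resistance-one edge $(P,Q)^N\to(\tilde{P},\tilde{Q})^N$ produces a spanning tree rooted at $(P^{\rm h},Q^{\rm h})^N$ whose stochastic potential is strictly smaller than that of $\mathcal{T}$ --- the desired contradiction. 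So what you are missing is not a cleverer global rewiring but the local observation that the return path must somewhere cross downhill, and that a downhill crossing costs at least two mutations.
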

\begin{proof}
Let $\mathcal{T}$ be a minimum resistance tree\footnote{Set the definition of resistance and minimum resistance rooted tree in the appendix.} rooted at $(P,Q)^N$, where $(P,Q)$ is an unaligned language. Then, by Claim~\ref{claim:complement}, there exists a language $(\tilde{P},\tilde{Q})\in \rho((P,Q),d)$ such that \eqref{eqn:inequality} holds and $\tr(\tilde{P}\tilde{Q})\geq \tr(PQ)+1$.

Accordingly, the path from $(\tilde{P},\tilde{Q})^N$ to $({P},{Q})^N$ in $T$ must have an edge
in which the trace between the associated language matrices first decreases below $\tr(PQ)$.
That is, there exists an edge starting from a homogeneous state $(P^{\rm h},Q^{\rm h})^N$ (``h'' for ``head'') and ending in the homogenous state $(P^{\rm t},Q^{\rm t})^N$ (``t'' for ``tail'') such that
$$\tr(P^{\rm h},Q^{\rm h}) \ge \tr(PQ) + 1\quad\mathrm{and}\quad
\tr(P^{\rm t},Q^{\rm t}) \le \tr(PQ).$$
By Claim~\ref{claim:onemutation}, this edge must have a resistance of at least two because the trace is strictly decreasing along this edge.

Now construct a tree by removing the edge emanating from $(P^{\rm h},Q^{\rm h})^N$
and adding an edge from $(P,Q)^N$ to $(\tilde{P},\tilde{Q})^N$. The result is a new tree rooted at
$(P^{\rm h},Q^{\rm h})^N$. The removed edge has a resistance of at least two. By Claim~\ref{claim:complement}, the added edge has a resistance of one. Accordingly, the stochastic potential (i.e., sum of resistances of all links) of the new tree is lower, and therefore $(P,Q)^N$ cannot be a stochastically stable state. \end{proof}

Figure~\ref{figtreeproof} presents a graphical illustration of the proof, in which an edge of higher resistance is removed in favor of an edge with lower resistance, based on Claims~\ref{claim:onemutation}--\ref{claim:complement}.

\begin{figure}
\begin{center}
\includegraphics[width=0.2\textwidth]{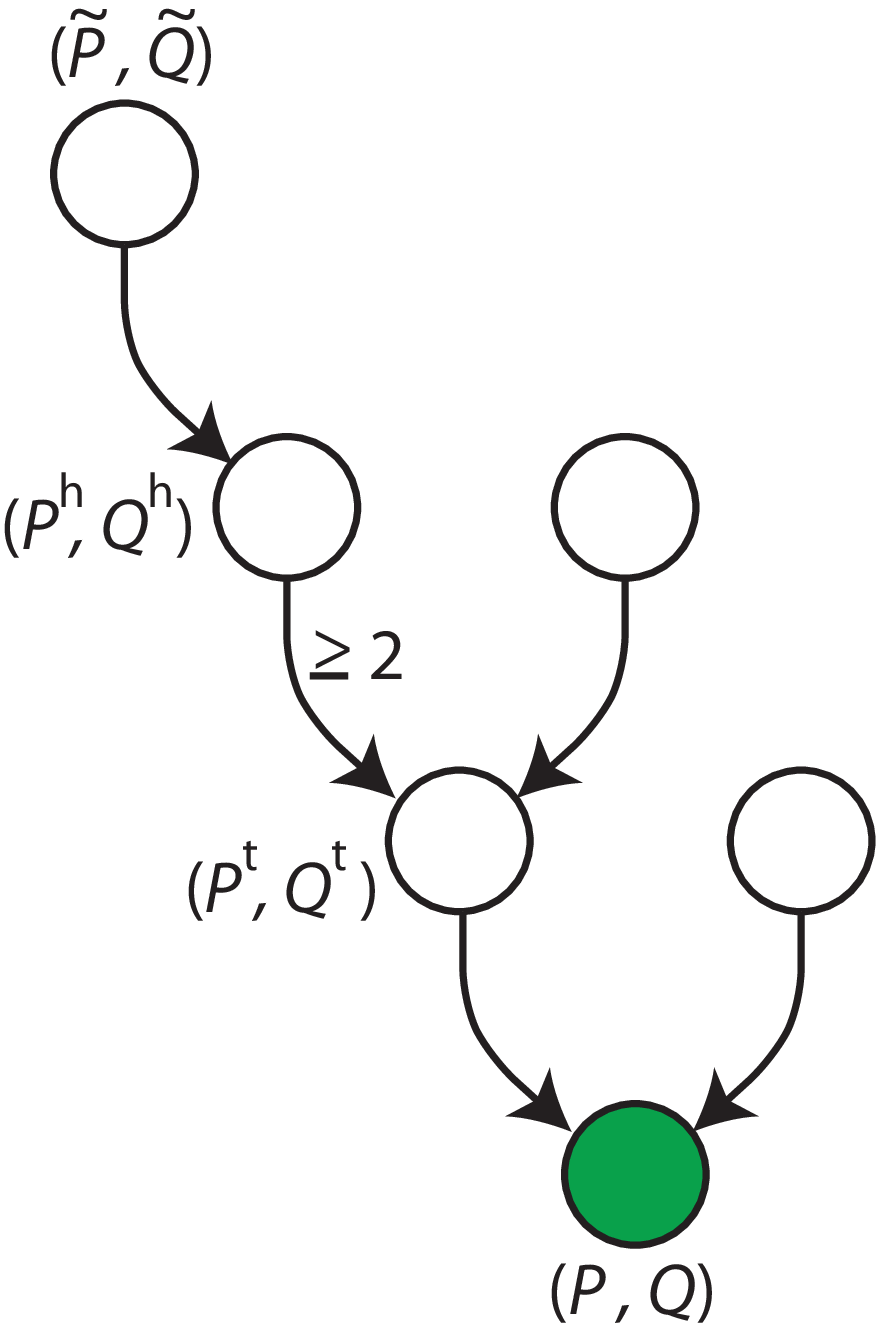}
\hfil
\includegraphics[width=0.2\textwidth]{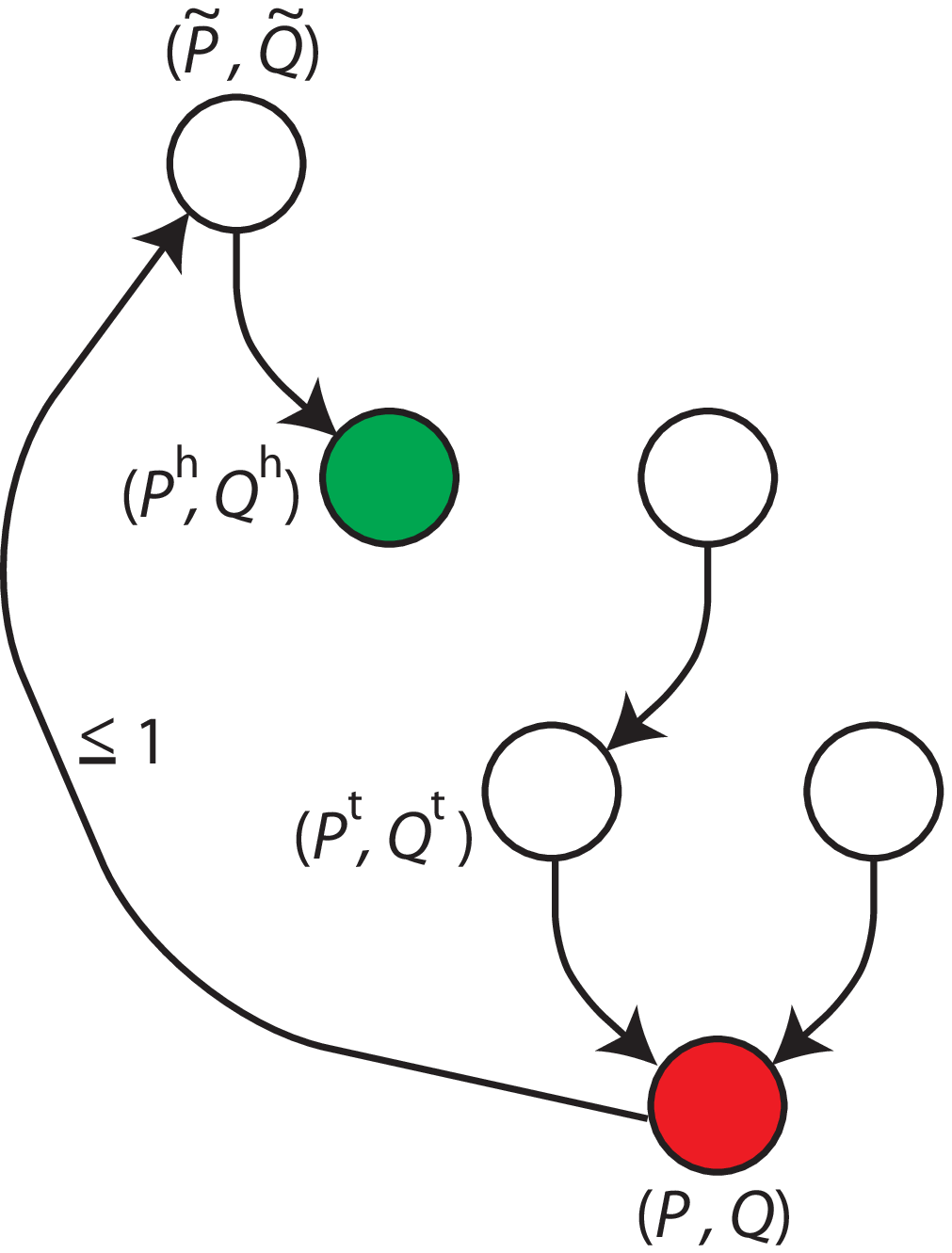}
\end{center}
\caption{Left: Original tree rooted at $(P,Q)^N$ with an edge of resistance at least two.
Right: Rewired tree rooted at $(P^{\rm h},Q^{\rm h})^N$ with a new edge of resistance at most one.}
\label{figtreeproof}
\end{figure}

Thus far, we have shown that the set of stochastically stable states is a subset of the optimal states. The next step is to show that the two sets are indeed equivalent. For this proof, we restrict our discussion to the case $m=n$. The proof of this assertion for the case $m\not=n$ follows the same lines, but with a more detailed argument.

We will argue that the set of optimal states is a subset of set of stochastically stable states
by making use of symmetries of $\Pmn$.

First, consider an arbitrary perturbed Markov chain $\mathcal{P}^\epsilon$ on the set of states $V$. Let us say that the perturbed Markov chain $\mathcal{P}^\epsilon$ admits an automorphism $\pi$ if $\pi:V\to V$ is a one to one mapping such that $r_{ij}=r_{\pi(i)\pi(j)}$ for any $i,j\in V$, where $r_{ij}$ is the resistance of the transition from state $i$ to state $j$. In fact, an automorphism $\pi$ is nothing but a relabeling of the states of $\mathcal{P}^\epsilon$ that preserves the resistance of each link. Let $\tau(\mathcal{P}^\epsilon)$ be the set of automorphisms of $\mathcal{P}^\epsilon$. Then, the following result follows immediately.

\begin{claim}\label{claim:symmetry}
 Let $S$ be the set of stochastically stable states of a perturbed Markov chain $\mathcal{P}^{\epsilon}$.  Then for any $i\in S$, we have
$$\{\pi(i)\mid \pi\in \tau(\mathcal{P}^\epsilon)\}\subseteq S.$$
\end{claim}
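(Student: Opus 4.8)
The plan is to invoke the tree-surgery (Freidlin--Wentzell / Young) characterization of stochastic stability recalled in the appendix: a state $i$ is stochastically stable if and only if its minimum \emph{stochastic potential} $\gamma(i)$---the minimum, over all spanning trees directed toward $i$, of the sum of the edge resistances $r_{k\ell}$---equals $\min_{j\in V}\gamma(j)$. Granting this, the claim reduces to showing that every automorphism $\pi\in\tau(\mathcal{P}^\epsilon)$ satisfies $\gamma(\pi(i))=\gamma(i)$ for all $i$.

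First I would fix $i\in S$ and $\pi\in\tau(\mathcal{P}^\epsilon)$, and let $\mathcal{T}$ be a minimum-resistance tree rooted at $i$. Applying $\pi$ edgewise, set $\pi(\mathcal{T})$ to be the digraph with edge set $\{(\pi(k)\to\pi(\ell)) : (k\to\ell)\in\mathcal{T}\}$. Because $\pi$ is a bijection of $V$, it carries a spanning arborescence rooted at $i$ to a spanning arborescence rooted at $\pi(i)$: every vertex other than $\pi(i)$ still has exactly one outgoing edge, and no directed cycle is created since $\mathcal{T}$ has none and $\pi$ is injective. Thus $\pi(\mathcal{T})$ is an admissible tree for computing $\gamma(\pi(i))$. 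Next I would compare total resistances. Using the defining property $r_{k\ell}=r_{\pi(k)\pi(\ell)}$ and summing over the edges of $\mathcal{T}$,
\[
\sum_{(k\to\ell)\in\pi(\mathcal{T})} r_{k\ell}
=\sum_{(k\to\ell)\in\mathcal{T}} r_{\pi(k)\pi(\ell)}
=\sum_{(k\to\ell)\in\mathcal{T}} r_{k\ell}
=\gamma(i),
\]
so $\gamma(\pi(i))\le\gamma(i)$. Since $i\in S$, $\gamma(i)=\min_{j\in V}\gamma(j)\le\gamma(\pi(i))$, whence $\gamma(\pi(i))=\min_{j\in V}\gamma(j)$ and $\pi(i)\in S$. (Equivalently, one may note that $\pi^{-1}$ is also an automorphism and run the inequality in both directions.) Letting $\pi$ range over $\tau(\mathcal{P}^\epsilon)$ gives $\{\pi(i):\pi\in\tau(\mathcal{P}^\epsilon)\}\subseteq S$.

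I do not anticipate a genuine obstacle here---the authors themselves note the result ``follows immediately.'' The only points requiring a word of care are that an automorphism really does send rooted spanning trees to rooted spanning trees (handled by bijectivity, as above) and that we are entitled to the minimum-resistance-tree criterion for stochastic stability, which is precisely the machinery set up in the appendix. The real work lies in the \emph{next} step: exhibiting enough automorphisms of $\Pmn$ (relabelings of objects, symbols, and agents) so that the orbit of one optimal state under $\tau(\Pmn)$ exhausts $\mathcal{O}$, thereby upgrading Claim~\ref{claim:forward} to the full equivalence in Theorem~\ref{thm:mainm=n}.
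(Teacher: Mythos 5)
Your argument is exactly the paper's: push a minimum-resistance tree rooted at $i$ forward through $\pi$, observe it is a tree rooted at $\pi(i)$ of equal total resistance, and conclude via Theorem~\ref{thm:young} that $\pi(i)$ is also stochastically stable. The proposal is correct and simply spells out the bijectivity and $\gamma$-comparison details that the paper leaves implicit.
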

\begin{proof}
If $T$ is a rooted tree at $i$, then the image of $T$ under an automorphism $\pi$ is also a rooted tree at $\pi(i)$ which, by the definition of an automorphism, has the same weight as the original tree. Therefore, the assertion follows by Theorem~\ref{thm:young} in appendix.
\end{proof}

This claim states for general Markov chains that if a state, $i$, is stochastically stable, then the image $\pi(i)$ also is stochastically stable for any automorphism, $\pi$.

To complete the proof of Theorem~\ref{thm:mainm=n}, we will construct automorphisms between languages that simply consist of a relabeling of objects. For any $m\times m$ permutation matrix\footnote{An $m\times m$ binary matrix $P$ is a permutation matrix if every row and column of $P$ contains only one nonzero element.}  $\mathscr{P}$, let us define $\pi_{\mathscr{P}}:\Lmn\to\Lmn$ as follows:
\begin{align}\nonumber
  \pi_{\mathscr{P}}((P_1,Q_1),\ldots, (P_N,Q_N))=((\mathscr{P}P_1,Q_1\mathscr{P}^{-1}),\ldots,(\mathscr{P}P_N,Q_N\mathscr{P}^{-1})).
\end{align}
Since $\mathscr{P}$ is a permutation matrix, $\pi_{\mathscr{P}}$ is a bijection from $\Lmn\to \Lmn$. Also $\tr(PQ)=\tr(\mathscr{P}P Q\mathscr{P}^{-1})$ implies that $f_i(\bfP,\bfQ)=f_i(\pi_{\mathscr{P}}(\bfP,\bfQ))$ for all $i$. Finally, $\operatorname{rand}(\Lmn)$ has the same distribution as $\operatorname{rand}(\pi_{\mathscr{P}}(\Lmn))$. Therefore, it follows that $\pi_{\mathscr{P}}$ is an automorphism on $\Pmn$ for any permutation matrix $\mathscr{P}$.

Now consider a stochastically stable state $(\bfP,\bfQ)=(P,Q)^N$ with $m=n$. By Claim~\ref{claim:forward}, we need to have $\tr(PQ)=m$, and hence $Q=P^{-1}$.
Any aligned language $(\tilde{P},\tilde{P}^{-1})$ can be written as a $((\tilde{P}P^{-1})P,P^{-1}(\tilde{P}P^{-1})^{-1})$, i.e., they are related through the permutation $\mathscr{P} = \tilde{P}P^{-1}$. Therefore $\pi_{\tilde{P}P^{-1}}((P,P^{-1})^N)=(\tilde{P},\tilde{P}^{-1})^N$. From Claim~\ref{claim:symmetry}, it follows that any optimal state $(\tilde{P},\tilde{Q})^N$ is stochastically stable. This completes the proof of Theorem~\ref{thm:mainm=n}.

\subsection{Discussion}

Theorem~\ref{thm:mainm=n} implies that for sufficiently small mutation probability $\epsilon$, in the long run, the dynamic process $\mathcal{P}_{m,n,d}^{\epsilon}$ spends almost all of the time on the states that maximize linguistic coherence. The particular language nevertheless will change from time to time, consistent with observed phenomenon of linguistic drift \cite{Jespersen_PL}. In the more natural case of $m \neq n$, where the number of objects and symbols does not match, this drift ought to be particularly prevalent as the necessary ambiguity provides pathways for such changes. The many well documented cognates in modern natural languages point to the divergence in the meaning of a particular form as a vehicle for linguistic change \cite{P_diversity}.

In short, the intrinsic randomness that enables the players to search the set of languages will prevent settling into any sort of permanent language state. However, despite never freezing in a particular language, we can expect players to understand each other for a high proportion of the time.

{
\section{Localized competition dynamics}\label{sec:dynamic_var}
Up until this point we have studied the dynamics described by \eqref{eqn:maindynamic}
in which, in the absence of mutations, only the fittest players \textit{in the whole society} are able to reproduce themselves.
Inspired by pairwise comparison dynamics \cite{g1010003}, in this section we propose an intuitive new dynamic where in the absence of mutation, the fittest \textit{local} agents are able to reproduce.

In localized competition dynamics, each agent $i$ chooses a random subset of agents $\N_i[t]$ with $j\in \N_i[t]$ with probability $p_{ij}>0$ at every stage. Agent $i$ updates her
strategy according to
\begin{equation}
\label{dynamic_k_pc}
(P_i[t],Q_i[t]) = \begin{cases}
(P_{\hat{k}_i}[t],Q_{\hat{k}_i}[t]), & \text{with probability } 1-\epsilon \\
\operatorname{rand}(\mathcal{L}_{m,n}), & \text{with probability } \epsilon
\end{cases},
\end{equation}
where $\hat{k}_i\in \argmax_{j\in \N_i[t]}f_j(P[t],Q[t])$ is chosen uniformly from the set of agents with maximum fitness among the agents in $\N_i[t]$. In words, at each time instance, an agent observes a random subset of agents and adapts the fittest language among these agents with a high probability or mutates with a low probability. We can interpret (\ref{dynamic_k_pc}) as \textit{localized} competition over limited resources.

Without mutation (i.e.\ when $\epsilon=0$) a more fit player reproduces herself, while a less fit player dies out in long run. The fitness function itself still reflects a global interaction---effective communication with the broader population confers advantages in local competitions.

\begin{theorem}\label{cor:pc}
In localized competition dynamics, a state is stochastically stable if and only if it is contained in $\mathcal{O}$, the set of optimal states.
\end{theorem}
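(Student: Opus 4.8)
The plan is to follow, essentially line for line, the three-part argument used for Theorem~\ref{thm:mainm=n}, checking at each step that the ingredients survive replacing the global imitation rule~\eqref{eqn:maindynamic} by the localized rule~\eqref{dynamic_k_pc}. First I would pin down the recurrent communication classes of the unperturbed ($\epsilon=0$) chain. A homogeneous state $(P,Q)^N$ is still absorbing, since then every sampled neighborhood $\N_i[t]$ contains only speakers of $(P,Q)$, so $\hat k_i$ speaks $(P,Q)$. Conversely, from any state there is a positive-probability one-step move into a homogeneous state: with positive probability every agent draws $\N_i[t]=\{1,\dots,N\}$, whereupon each $\hat k_i$ is an independent uniform draw from the \emph{common} set of globally fittest agents, so with positive probability all the $\hat k_i$ coincide with a single fittest agent $k^\star$ and the state jumps to $(P_{k^\star},Q_{k^\star})^N$. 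Hence the recurrent classes of the unperturbed localized chain are exactly the homogeneous states, just as for $\mathcal{P}_{m,n,d}^{0}$; irreducibility of the perturbed chain follows since mutations can now produce any language in $\Lmn$, and aperiodicity from the positive-probability self-loop at a homogeneous state in which no agent mutates.

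Second, I would re-establish the analogues of Claims~\ref{claim:onemutation}--\ref{claim:forward}. The key observation is that the one-mutation analysis does not use the update rule beyond a \emph{single-step takeover}: after one agent mutates from $(P,Q)$ to $(\tilde P,\tilde Q)$ the post-mutation state is $((P,Q)^{N-1},(\tilde P,\tilde Q))$, the fitness computation of Claim~\ref{claim:onemutation} is verbatim, and inequality~\eqref{eqn:inequality} is exactly the condition that the lone mutant be weakly fittest there. When~\eqref{eqn:inequality} holds, with positive probability every non-mutant $i$ draws $\N_i[t]=\{i,\text{mutant}\}$ and (breaking a possible tie in the mutant's favor) copies it, so in one step the state becomes $(\tilde P,\tilde Q)^N$; since mutations now hit all of $\Lmn$, the disk restriction $\rho(\cdot,d)$ simply disappears, and every ordered pair of homogeneous states satisfying~\eqref{eqn:inequality} is joined by a resistance-one edge. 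For the converse I would use the convention that an agent's own strategy competes in its neighborhood ($i\in\N_i[t]$), so that the fittest neighbor is always at least as fit as the agent itself: if~\eqref{eqn:inequality} fails the lone mutant is strictly least fit, hence never adopted under the unperturbed dynamics, the mutant count cannot rise, and $(\tilde P,\tilde Q)^N$ is unreachable with one mutation. Claim~\ref{claim:complement} concerns only binary row-stochastic matrices, so it transfers word for word, and the tree-rewiring step of Claim~\ref{claim:forward} --- delete the trace-decreasing edge it identifies and insert the resistance-one edge $(P,Q)^N\to(\tilde P,\tilde Q)^N$ supplied by Claim~\ref{claim:complement} --- carries over unchanged; hence no unaligned homogeneous state is stochastically stable.

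Third, I would run the symmetry argument for the reverse inclusion. For any $m\times m$ permutation matrix $\mathscr P$ the map $\pi_{\mathscr P}$ of Section~\ref{secdynamics} relabels only the \emph{languages}, leaving the agent indices --- and hence the neighborhood law $(p_{ij})$ --- untouched; it preserves every fitness because $\tr(PQ)=\tr(\mathscr P P Q\mathscr P^{-1})$, and preserves the uniform law $\operatorname{rand}(\Lmn)$. Thus $\pi_{\mathscr P}$ is an automorphism of the perturbed localized chain, and Claim~\ref{claim:symmetry} propagates stochastic stability from one aligned state to all of them. Together with the second step and Claim~\ref{claim:forward} (which makes the stochastically stable set nonempty and contained in the aligned homogeneous states), this shows the stochastically stable set is exactly $\mathcal{O}$. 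As with Theorem~\ref{thm:mainm=n}, I would spell this last step out for $m=n$ --- where an aligned language is a permutation matrix $P$ with $Q=P^{-1}$ and any two such differ by the object permutation $\tilde P P^{-1}$ --- and only indicate the extra bookkeeping (object \emph{and} symbol permutations) needed when $m\neq n$.

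The step I expect to be the main obstacle is the converse half of the analogue of Claim~\ref{claim:onemutation}. Under the global rule an active agent can imitate only the \emph{globally} fittest language, making it immediate that a strictly inferior mutant cannot propagate; under~\eqref{dynamic_k_pc} an agent imitates whatever is fittest \emph{within its sampled neighborhood}, so one has to exclude a detrimental mutant being amplified through an unlucky succession of neighborhood draws. This is exactly where the modeling assumption $i\in\N_i[t]$ is needed: without it an agent drawing $\N_i[t]=\{\text{mutant}\}$ would be forced to copy an arbitrary, possibly strictly worse, language, which would resistance-one-connect every pair of homogeneous states and destroy the characterization. Once this is secured, the rest is a transcription of the arguments already given for $\mathcal{P}_{m,n,d}^{\epsilon}$.
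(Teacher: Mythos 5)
Your proposal is correct and follows the same route as the paper, which itself only sketches this proof by transcribing the three-part argument for Theorem~\ref{thm:mainm=n}: homogeneous states are identified as the recurrent classes via the positive-probability event $\N_i[t]=\{1,\ldots,N\}$ for all $i$, resistance-one edges between homogeneous states are characterized by \eqref{eqn:inequality} (with the $d$-disk restriction dropped, since \eqref{dynamic_k_pc} mutates uniformly over all of $\Lmn$), the tree-rewiring of Claim~\ref{claim:forward} rules out unaligned states, and the permutation automorphisms of Claim~\ref{claim:symmetry} sweep out all of $\mathcal{O}$. The one place you go beyond the paper's sketch is the converse half of the analogue of Claim~\ref{claim:onemutation} --- that a strictly less fit mutant cannot be amplified by an unlucky run of neighborhood draws --- and your diagnosis there is apt: the paper's description of \eqref{dynamic_k_pc} never states the convention $i\in\N_i[t]$ (nor excludes $\N_i[t]=\emptyset$), yet without it an agent drawing $\N_i[t]=\{\text{mutant}\}$ would be forced to copy an arbitrary language, every ordered pair of homogeneous states would acquire a resistance-one edge, and the resistance-at-least-two lower bound that powers the rewiring step would evaporate; making that convention explicit, as you do, is a genuine improvement on the argument as printed.
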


The proof of this result is similar to the proof of Theorem~\ref{thm:mainm=n}, and to avoid redundancy, we only provide a sketch of the proof.

To start with, the only recurrent classes of the unperturbed dynamics \eqref{dynamic_k_pc} ($\epsilon=0$) are the homogeneous states because for all pairs $(i,j)$, we have $p_{ij}>0$ which implies that there is a positive chance that $\N_i[t]=\{1,\ldots,N\}$ for all agents $i$, as a result, at each time instance there is a positive probability that all the agents in the society conform to one language (which is one of the fittest languages in the current state of society). Now, if at a given time instance $t$, all the agents in the society are using an unaligned language $(P,Q)$, following the same argument as of the proof of Claim \ref{claim:onemutation}, by one mutation to a language $(P',Q')$ satisfying \eqref{eqn:inequality}, all the agents will adapt $(P',Q')$ in finite time (almost surely). It is worth mentioning that the condition \eqref{eqn:inequality} does not depend on the size of the society $N$, which allows us to draw such a conclusion. Using an argument similar to that of Theorem~\ref{thm:mainm=n}, by proper rewiring of any routed tree routed at any non-optimal state, one can find a routed tree with a lesser weight that is routed at an optimal state which shows that the only stochastically stable states of the localized competition dynamics \eqref{dynamic_k_pc} are the optimal states. Finally, using the automorphisms of the underlying perturbed Markov chain (through permutation matrices), we can show that any optimal state is stochastically stable.}

\section{Simulations}

In this section, we provide simulation results of the mutation-selection dynamics \eqref{eqn:maindynamic} and the localized competition dynamics \eqref{dynamic_k_pc}.

 In Figure~\ref{fig:simul}, a simulation result of main dynamics \eqref{eqn:maindynamic} is provided for a society of size $N=30$ and $m=n=3$. There, the percentage of the population incorporating an aligned language as a function of time $t$ is plotted for $t=1,\ldots,300$. The initial language $(P_i[0],Q_i[0])$ of each agent is chosen uniformly randomly from the ensemble of binary stochastic matrices. As predicted by Theorem~\ref{thm:mainm=n}, after an initial transition phase, the dynamics spend most of its time on optimal states-the states with most of the agents using an aligned language. In this simulation, the language of each agents at the termination time $t=300$ is the aligned language
 \[(P,Q)=\left(\left[\begin{array}{ccc}
    0   &  1 &    0\\
    0   &  0 &    1\\
    1   &  0 &    0\\
 \end{array}\right],\left[\begin{array}{ccc}
    0   &  0 &    1\\
    1   &  0 &    0\\
    0   &  1 &    0\\
 \end{array}\right]\right).\]

 \begin{figure}[t]
\begin{center}
\includegraphics[width=0.7\textwidth]{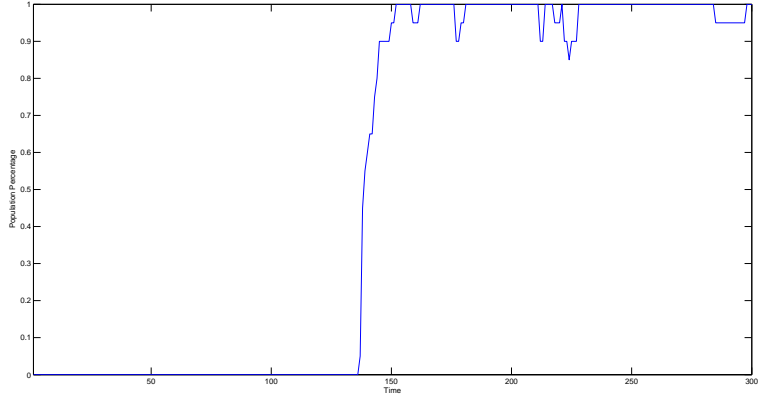}
\end{center}
\caption{A simulation of the dynamics \eqref{eqn:maindynamic} for a society with $N=20$ agents, $m=n=3$, revision probability $p=0.3$, mutation probability $\epsilon=0.01$, and mutation radius $d=3$.}
\label{fig:simul}
\end{figure}

A similar simulation for the localized competition dynamics \eqref{dynamic_k_pc} is provided in Figure~\ref{fig:local} with the same set of parameters. For this simulation, we chose the probability $p_{ij}$ of agent $j$ being a neighbor of agent $i$ uniformly randomly from $(0,1)$. Again as observed by the simulation, the language of the agents in the society drifts towards aligned languages as time passes. In this simulation, at the termination time $t=1000$, all the agents use the aligned language
 \[(P,Q)=\left(\left[\begin{array}{ccc}
    1   &  0 &    0\\
    0   &  1 &    0\\
    0   &  0 &    1\\
 \end{array}\right],\left[\begin{array}{ccc}
    1   &  0 &    0\\
    0   &  1 &    0\\
    0   &  0 &    1\\
 \end{array}\right]\right).\]

 \begin{figure}[t]
\begin{center}
\includegraphics[width=0.7\textwidth]{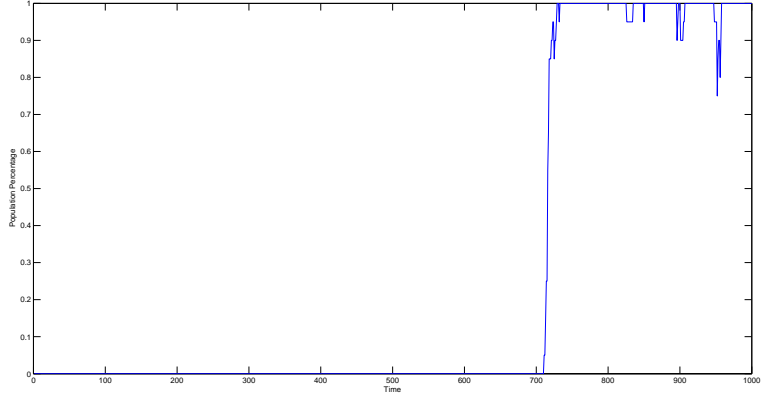}
\end{center}
\caption{A simulation of the localized competition dynamics \eqref{dynamic_k_pc} for a society with $N=20$ agents, $m=n=3$, revision probability $p=0.3$, and mutation probability $\epsilon=0.01$ with $p_{ij}$ being chosen uniformly from $(0,1)$.}
\label{fig:local}
\end{figure}

Finally, Figure~\ref{figdrift} illustrates linguistic drift of the main dynamics \eqref{eqn:maindynamic} through stochastic stability. There are $N=10$ agents, $m=n=3$, and the mutation probability is $\epsilon = 0.2$. There are only $6$ aligned languages. The figure shows the number of agents using each of these aligned languages. We see that the population experiences transitions between nearly homogeneous states of aligned languages. Note that in this simulation, the mutation probability is an order of magnitude larger than the prior simulations. For smaller mutation probabilities, there would be closer conformity to the main result of stochastic stability in Theorem~\ref{thm:mainm=n}. That is, the simulations would show transitions between homogenous states of aligned languages with a large occupation measure at such homogenous states. The selected mutation probability shows similar behavior with a much shorter simulation time.

\begin{figure}[ht]
\begin{center}
\includegraphics[width=0.7\textwidth]{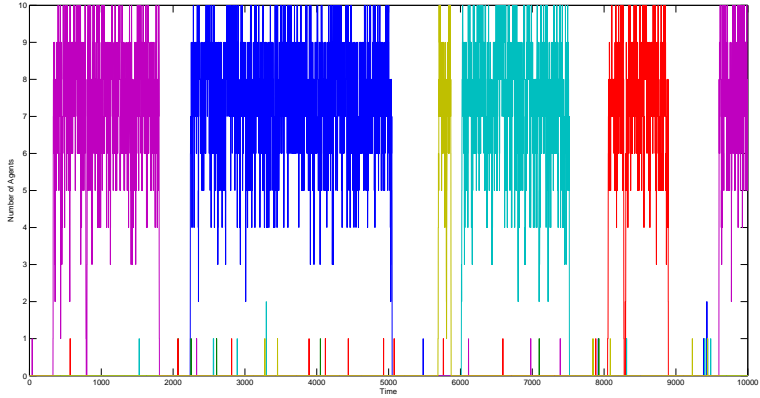}
\end{center}
\caption{A simulation of the dynamics \eqref{eqn:maindynamic} for a society with $N=10$ agents, $m=n=3$, revision probability $p=0.3$, mutation probability $\epsilon=0.2$, and mutation radius $d=3$.}
\label{figdrift}
\end{figure}

\section{Concluding Remarks}
In this paper, we studied language games in finite populations. We showed that, unlike the infinite population case, in long run, the only emerging languages from various selection-mutation processes are the languages with maximum communication efficiency. Following prior work on the evolution of conventions and other settings \cite{Young_Conventions,Young_ISSS}, the proof relied on resistance tree analysis of perturbed Markov chains.

Several questions are left open for future studies. Among them is the role of the connectivity of graph on the emerging stochastically stable states. More precisely, what is the characterization of stochastically stable strategies for language games on general (not necessarily fully connected) graphs? Also of interest is the study of the language games in connection with the models of opinion dynamics such as Hegselmann-Krause dynamics \cite{Krause:02}. For example, what happens if the members of the society only observe the efficiency of languages similar to their own languages? Some initial progress along these lines are reported in \cite{evolang_fox}.

\begin{appendices}
\section{Perturbed Markov Chains and Stochastic Stability}\label{appendix:ss}

Let $\mathcal{P}^{\epsilon}$ be the transition probability matrix of an irreducible and aperiodic time-homogeneous Markov chain\footnote{Strictly speaking, a (homogeneous) Markov chain is characterized by the initial occupation measure of the Markov chain and its transition probability matrix (kernel). In this discussion, we informally identify Markov chains by their probability transition matrices.} over a finite set of states $Z$ for each $\epsilon \in (0,\bar{\epsilon}]$. If for each $z,z' \in Z$ we have
\begin{equation}\nonumber
\lim_{\epsilon \rightarrow 0} \mathcal{P}^{\epsilon}_{z,z'} = \mathcal{P}^{0}_{z,z'},
\end{equation}
for some Markov chain $\mathcal{P}^{0}$ over $Z$, and
\begin{equation}\nonumber
\label{resistance}
0 < \lim_{\epsilon \rightarrow 0} \frac{\mathcal{P}^{\epsilon}_{z,z'}}{\epsilon^{r(z,z')}} < \infty,
\end{equation}
for some $r(z,z') \geq 0$ then $\mathcal{P}^{\epsilon}$ is a \textit{regular perturbed Markov process}. We call $\mathcal{P}^{0}$ the unperturbed process. If $\mathcal{P}^{\epsilon}_{z,z'} = 0$ for all $\epsilon$, then we define $r(z,z') = \infty$. It is straightforward to see that $\mathcal{P}_{m,n,d}^{\epsilon}$ is a regular perturbed Markov process, with $\mathcal{P}_{m,n,d}^{0}$ being the reducible Markov chain obtained by substituting $\epsilon = 0$.

Note that since $\mathcal{P}^\epsilon$ is an aperiodic and irreducible Markov chain for any $\epsilon>0$, it has a unique stationary distribution $\mu(\mathcal{P}^{\epsilon})$. However, it is possible that the limiting Markov chain $\mathcal{P}^0$ is not an irreducible or aperiodic chain and hence, $\mathcal{P}^0$ may not admit a unique stationary distribution. The notion of stochastic stability is concerned with which one of the stationary distributions survive. A state $z \in Z$ is \textit{stochastically stable} if
\begin{equation}\nonumber
\lim_{\epsilon \rightarrow 0} \mu_z(\mathcal{P}^{\epsilon}) > 0.
\end{equation}
To characterize the stochastically stable states of a perturbed Markov chain, we will make use of the theory of resistance trees \cite{Young_Conventions}. Let $R_1,...,R_J \subset Z$ be the recurrent communication classes of $\mathcal{P}^{0}$. Given two recurrent communication classes $R_i$ and $R_j$, let $\{z_0,z_1,...,z_K\}$ be a path satisfying $z_0 \in R_i$ and $Z_K \in R_j$. We call the quantity $\sum_{k = 0}^{K-1} r(z_k,z_{k+1})$ the \textit{resistance} of the path. With slight abuse of notation we define $r_{ij}$ to be the \textit{least resistance} among all such paths.

Consider a graph $G$ whose vertex set is the set of recurrent communication classes. An $R_i$-tree $T$ is a spanning tree in $G$ such that for any vertex $R_j, j \neq i$ there is a unique directed path from $R_j$ to $R_i$. We define
\begin{equation}\nonumber
\gamma(R_i) = \min_{T \in \mathcal{T}_{R_i}} \sum_{(R_j,R_k) \in E(T)} r_{jk},
\end{equation}
where $\mathcal{T}_{R_i}$ is the set of all $R_i$ trees in $G$, and $E(T)$ is the set of edges of $T$. $\gamma(R_i)$ is referred to as the \textit{stochastic potential} of $R_i$ \cite{Young_Conventions}.
 The following theorem \cite{Young_Conventions} characterizes exactly the set of stochastically stable states.
\begin{theorem}\label{thm:young} (\cite{Young_Conventions}, Lemma 1)
Let $\mathcal{P}^{\epsilon}$ be a regular perturbed Markov process and let $R_1,..,R_J$ be the recurrent communication classes of the unperturbed process $\mathcal{P}^{0}$. Then the stochastically stable states are precisely those states contained in the recurrent communication classes with minimum stochastic potential.
\end{theorem}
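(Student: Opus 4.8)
The plan is to reduce the statement to the Markov chain tree theorem (equivalently, the Freidlin--Wentzell estimates) and then to collapse the resulting tree computation on the full state space $Z$ down to the quotient graph $G$ on the recurrent classes. First I would record that any subsequential limit $\mu_0$ of the stationary distributions $\mu(\mathcal{P}^\epsilon)$ as $\epsilon\to 0$ is itself stationary for $\mathcal{P}^0$: this follows by passing to the limit in $\mu(\mathcal{P}^\epsilon)=\mu(\mathcal{P}^\epsilon)\mathcal{P}^\epsilon$ using the entrywise convergence $\mathcal{P}^\epsilon\to\mathcal{P}^0$. Since every stationary distribution of a finite Markov chain is supported on its recurrent states, no transient state of $\mathcal{P}^0$ can be stochastically stable, and it suffices to locate the stochastically stable states among $R_1\cup\cdots\cup R_J$.

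For each $\epsilon>0$ the chain $\mathcal{P}^\epsilon$ is irreducible, so the Markov chain tree theorem expresses its stationary probabilities as $\mu_z(\mathcal{P}^\epsilon)=q_\epsilon(z)/\sum_{z'\in Z}q_\epsilon(z')$, where $q_\epsilon(z)=\sum_{T\in\mathcal{T}_z}\prod_{(x\to y)\in T}\mathcal{P}^\epsilon_{x,y}$ and $\mathcal{T}_z$ is the set of spanning in-trees (arborescences) rooted at $z$. The regular-perturbation hypothesis gives $\mathcal{P}^\epsilon_{x,y}=\Theta(\epsilon^{r(x,y)})$ with a strictly positive constant (and $\mathcal{P}^\epsilon_{x,y}\equiv 0$ exactly when $r(x,y)=\infty$), so each surviving tree has weight $\Theta(\epsilon^{R(T)})$ with $R(T)=\sum_{(x\to y)\in T}r(x,y)$. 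Hence $q_\epsilon(z)=\Theta(\epsilon^{\rho(z)})$ for $\rho(z)=\min_{T\in\mathcal{T}_z}R(T)$, and dividing numerator and denominator by $\epsilon^{\rho^*}$ with $\rho^*=\min_{z'}\rho(z')$ shows that $\lim_{\epsilon\to 0}\mu_z(\mathcal{P}^\epsilon)>0$ if and only if $\rho(z)=\rho^*$.

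The heart of the argument is then a reduction lemma asserting $\rho(z)=\gamma(R_i)$ for every $z\in R_i$. For the inequality $\gamma(R_i)\le\rho(z)$ I would take a minimizing in-tree on $Z$ rooted at $z$, contract each recurrent class to a single vertex, and discard redundant edges to obtain a spanning in-tree of $G$ rooted at $R_i$; because each quotient edge $R_j\to R_k$ has resistance $r_{jk}$ no larger than that of the $Z$-path it replaces, and intra-class and transient edges contribute nonnegatively, the contracted tree has total resistance at most $\rho(z)$. For the reverse inequality I would lift a minimizing $R_i$-tree on $G$: replace each quotient edge by a least-resistance path in $Z$ realizing $r_{jk}$, build a resistance-zero spanning in-tree within each recurrent class directed toward its exit vertex, and route the transient states to recurrent states along resistance-zero paths; assembling these pieces yields a spanning in-tree of $Z$ rooted at $z$ of total resistance exactly $\gamma(R_i)$. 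Combining the lemma with the previous step gives $\rho^*=\min_i\gamma(R_i)$, so a recurrent state $z\in R_i$ is stochastically stable exactly when $\gamma(R_i)$ is minimal, which is the claim.

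I would expect the main obstacle to be the bookkeeping in the reduction lemma: one must verify that contracting a $Z$-arborescence really leaves a genuine arborescence on $G$ (connected, acyclic, and rooted at $R_i$) after deleting duplicate inter-class edges, and, conversely, that gluing the lifted paths, the within-class resistance-zero in-trees, and the transient attachments produces an edge set with no cycles and a unique directed path from every vertex to $z$. Care is also needed so that each within-class in-tree points to precisely the vertex at which its lifted inter-class path exits, ensuring the total resistance telescopes to $\gamma(R_i)$ with no spurious extra edges.
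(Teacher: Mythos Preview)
The paper does not prove Theorem~\ref{thm:young}; it is quoted verbatim from \cite{Young_Conventions} as background and left unproved in the appendix. So there is no ``paper's own proof'' to compare against.

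That said, your outline is essentially Young's original argument, and it is sound. The Markov chain tree theorem gives $\mu_z(\mathcal{P}^\epsilon)\propto q_\epsilon(z)$ with $q_\epsilon(z)=\Theta(\epsilon^{\rho(z)})$, so stochastic stability is equivalent to $\rho(z)=\rho^*$; the reduction $\rho(z)=\gamma(R_i)$ for $z\in R_i$ is exactly the content of Young's Lemma~1. Two points worth tightening: (i) in the contraction step, after identifying each recurrent class to a vertex the resulting multigraph on $G$ need not be a tree, and you must argue that one can extract an $R_i$-tree from it of no greater resistance (the standard trick is to keep, for each contracted class, the single outgoing edge lying on the unique $Z$-path to the root, which automatically yields an arborescence); (ii) the equality $\rho(z)=\gamma(R_i)$, rather than merely $\rho(z)=\gamma(R_i)+c$ for a class-independent constant $c$, relies on the facts that every transient state has a zero-resistance path to some recurrent class and that intra-class spanning trees have zero resistance. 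You flag both issues in your last paragraph, so the proposal is a correct sketch; it just needs those details filled in.
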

\end{appendices}

\small
\bibliographystyle{plain}
\bibliography{signaling}

\begin{thebibliography}{10}

\bibitem{Benaim2003}
Michel Benaim and Jorgen~W. Weibull.
\newblock Deterministic approximation of stochastic evolution in games.
\newblock {\em Econometrica}, 71(3):873--903, May 2003.

\bibitem{RePEc:eee:gamebe:v:5:y:1993:i:3:p:387-424}
Lawrence Blume.
\newblock The statistical mechanics of strategic interaction.
\newblock {\em Games and Economic Behavior}, 5(3):387--424, 1993.

\bibitem{Pollack_Linear_PE}
Sevan~G. Ficici and Jordan~B. Pollack.
\newblock Coevolving communicative behavior in a linear pursuer-evader game.
\newblock In Kobayashi Pfeifer, Blumberg, editor, {\em Proceedings of the Fifth
  International Conference of the Society for Adaptive Behavior}, pages
  505--514. Cambridge: MIT Press, 1998.

\bibitem{evolang_fox}
M.J. Fox, G.~Piliouras, and J.S. Shamma.
\newblock Medium and long-run properties of lingistic community evolution.
\newblock In {\em 9th International Conference on the Evolution of Language
  (Evolang IX)}, pages 110--118, March 2012.

\bibitem{Krause:02}
R.~Hegselmann and U.~Krause.
\newblock Opinion dynamics and bounded confidence: {M}odels, analysis, and
  simulation.
\newblock {\em Journal of Artificial Societies and Social Simulation}, 5:1--24,
  2002.

\bibitem{Hofbauer2008843}
Josef Hofbauer and Simon~M. Huttegger.
\newblock Feasibility of communication in binary signaling games.
\newblock {\em Journal of Theoretical Biology}, 254(4):843 -- 849, 2008.

\bibitem{Huttegger2007-HUTEAT-2}
Simon~{}M. Huttegger.
\newblock Evolution and the explanation of meaning.
\newblock {\em Philosophy of Science}, 74(1):1--27, 2007.

\bibitem{Jespersen_PL}
Otto Jespersen.
\newblock {\em Progress in Language with Special Reference to English}.
\newblock London: Swan Sonnenschein \& Co., 1894.

\bibitem{DBLP:KleinbergPT09}
Robert Kleinberg, Georgios Piliouras, and {\'E}va Tardos.
\newblock Multiplicative updates outperform generic no-regret learning in
  congestion games: extended abstract.
\newblock In {\em STOC}, pages 533--542, 2009.

\bibitem{Lewis_Convention}
D.~Lewis.
\newblock {\em Convention: A Philosophical Study}.
\newblock Harvard Univ. Press, Cambridge, MA., 1969.

\bibitem{marden-shamma-08}
J.~R. Marden and J.~S. Shamma.
\newblock Revisiting log-linear learning: Asynchrony, completeness and a
  payoff-based implementation.
\newblock {\em Games and Economic Behavior}, pages 788--808, July 2012.

\bibitem{MondererShapley96}
D.~Monderer and L.S. Shapley.
\newblock Potential games.
\newblock {\em Games and Economic Behavior}, 14:124--143, 1996.

\bibitem{NOWAK1999147}
M.A. Nowak, J.B. Plotkin, and D.C. Krakauer.
\newblock The evolutionary language game.
\newblock {\em Journal of Theoretical Biology}, 200(2):147 -- 162, 1999.

\bibitem{P_diversity}
Christina Pawlotisch, Panayotic Mertikopoulos, and Nikolaus Ritt.
\newblock Neutral stability, drift, and the diversification of languages.
\newblock In {\em 22nd International Conference on Game Theory}, volume 287,
  pages 1--12. Elsevier, 2011.

\bibitem{Pawlowitsch2007606}
Christina Pawlowitsch.
\newblock Finite populations choose an optimal language.
\newblock {\em Journal of Theoretical Biology}, 249(3):606 -- 616, 2007.

\bibitem{Pawlowitsch2008203}
Christina Pawlowitsch.
\newblock Why evolution does not always lead to an optimal signaling system.
\newblock {\em Games and Economic Behavior}, 63(1):203 -- 226, 2008.

\bibitem{g1010003}
William~H. Sandholm.
\newblock Pairwise comparison dynamics and evolutionary foundations for {N}ash
  equilibrium.
\newblock {\em Games}, 1(1):3--17, 2009.

\bibitem{DBLP:conf/sigmetrics/ShahS10}
Devavrat Shah and Jinwoo Shin.
\newblock Dynamics in congestion games.
\newblock In {\em ACM SIGMETRICS '10}, pages 107--118, 2010.

\bibitem{smith2003animal}
J.M. Smith and D.~Harper.
\newblock {\em Animal signals}.
\newblock Oxford series in ecology and evolution. Oxford University Press,
  2003.

\bibitem{springerlink:10.1007/s002850070004}
P.E. Trapa and M.A. Nowak.
\newblock Nash equilibria for an evolutionary language game.
\newblock {\em Journal of Mathematical Biology}, 41:172--188, 2000.

\bibitem{wuetal}
B.~Wu, J.~Garcia, C.~Hauert, and A.~Traulsen.
\newblock Extrapolating weak selection in evolutionary games.
\newblock {\em {PLoS} Computational Biology}, 9(12), December 2013.
\newblock e1003381.

\bibitem{Yong_Coevolution}
Chern~Han Yong and Risto Miikkulainen.
\newblock Coevolution of role-based cooperation in multi-agent systems.
\newblock {\em IEEE Transactions on Autonomous Mental Development}, pages
  170--186, October 2009.

\bibitem{Young_Conventions}
H.P. Young.
\newblock The evolution of conventions.
\newblock {\em Econometrica}, 61(1):57--84, January 1993.

\bibitem{Young_ISSS}
H.P. Young.
\newblock {\em Individual Strategy and Social Structure: An Evolutionary Theory
  of Institutions}.
\newblock Princeton University Press, 1998.

\end{thebibliography}

\end{document}